\newtheorem{theorem}{Theorem}
\newtheorem{lemma}[theorem]{Lemma}
\newtheorem{definition}[theorem]{Definition}
\title{The area of the Mandelbrot set and Zagier's conjecture}
\author{Patrick F. Bray}
\author{Hieu D. Nguyen}
\date{September 2, 2017}
\begin{document}

\begin{abstract}
       We prove Zagier's conjecture regarding the 2-adic valuation of the coefficients $\{b_m\}$ that appear in Ewing and Schober's series formula for the area of the Mandelbrot set in the case where $m\equiv 2 \mod 4$.
       \end{abstract}

\maketitle

\begin{section}{Introduction}

The Mandelbrot set $M$ is defined as the set of complex numbers $c \in \mathbb{C}$ for which the sequence $\{z_{n}\}$ defined by the recursion
\begin{equation} \label{eq:quadratic}
z_{n} = z_{n-1}^2 + c
\end{equation}
\noindent  with initial value \(z_{0} = 0\) remains bounded for all $n \geq 0$.  Douady and Hubbard \cite{DH} proved that $M$ is connected and Shishikura \cite{S} proved that $M$ has fractal boundary of Hausdorff dimension 2.  However, it is unknown whether the boundary of $M$ has positive Lebesgue measure, although Julia sets with positive area are known to exist (Buff and Ch\'eritat \cite{BC}). 

Ewing and Schober \cite{ES2} derived a series formula for the area of $M$ by considering its complement, $\tilde{M}$, inside the Riemann sphere $\overline{\mathbb{C}}=\mathbb{C}\cup \{\infty\}$, i.e. $\tilde{M}=\overline{\mathbb{C}}-M$.  It is known that $\tilde{M}$ is simply connected with mapping radius 1 (\cite{DH}).  In other words, there exists an analytic homeomorphism 
\begin{equation}\label{eq: analytichom}
	\psi(z) = z + \sum_{m = 0}^{\infty}{b_{m}z^{-m}}
\end{equation}
which maps the domain $\Delta = \left\{z: 1 < |z| \leq \infty \right\} \subset \overline{\mathbb{C}}$ onto $\tilde{M}$.  
It follows from the classic result of Gronwall \cite{Gr} that the area of the Mandelbrot set $M=\overline{\mathbb{C}}-\tilde{M}$ is given by

\begin{equation} \label{eq:area}
A = \pi \left[1 - \sum_{m = 1}^{\infty}{m|b_{m}|^2} \right].
\end{equation}

The arithmetic properties of the coefficients $b_m$ have been studied in depth, first by Jungreis \cite{J}, then independently by Levin \cite{L, L2}, Bielefeld, Fisher, and Haeseler \cite{BFH}, Ewing and Schober \cite{ES1, ES2}, and more recently by Shimauchi \cite{Sh}.   In particular, Ewing and Schober \cite{ES2} proved the following formula for the coefficients $b_m$.

\begin{theorem}[Ewing-Schober \cite{ES2}] Suppose $m\leq 2^{n+1}-3$.  Define the set of $n$-tuples
     \[J = \{\mathbf{j}=(j_1,\ldots,j_n): (2^n-1)j_1+\ldots + (2^2-1)j_{n-1}+(2-1)j_n = m+1\}\] 
     and given any $\mathbf{j} \in J$, set
     \[\alpha_{\mathbf{j}}(k):=\alpha(k):=\alpha=\frac{m}{2^{n-k+1}}-2^{k-1}j_1 - 2^{k-2}j_2 - \ldots - 2 j_{k-1}.\] Then
     \begin{equation} \label{eq:ewing-schober}
     b_m = -\frac{1}{m} \sum_{J} \prod_{k=1}^{n}C_{j_k}(\alpha(k))
     \end{equation}
     where $C_{j_k}(\alpha(k))$ is the binomial coefficient
     \begin{equation}
C_{j_k}(\alpha(k))=\frac{\alpha(\alpha-1)(\alpha-2)\cdots (\alpha-(j_k-1))}{j_k!}.
     \end{equation}
\end{theorem}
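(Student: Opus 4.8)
The plan is to identify $\psi$ with the inverse of the Douady--Hubbard (B\"ottcher) uniformization $\Phi$ of $M$, to rewrite $b_m$ as a residue of a power of $\Phi$, and then to extract that residue combinatorially from the classical product formula for $\Phi$. Write $z_k=z_k(c)$ for the iterate defined by \eqref{eq:quadratic} with $z_0=0$, regarded as a monic polynomial in $c$ of degree $2^{k-1}$. Recall from \cite{DH} that $\Phi=\psi^{-1}$ is given, for $|c|$ large, by the convergent product $\Phi(c)=c\prod_{k\ge 1}\bigl(1+c\,z_k(c)^{-2}\bigr)^{1/2^k}$, and observe that telescoping $z_k=z_{k-1}^2+c$ likewise gives $z_k(c)=c^{2^{k-1}}\prod_{j=1}^{k-1}\bigl(1+c\,z_j(c)^{-2}\bigr)^{2^{k-1-j}}$. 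Since $\psi(w)=w+\sum_{m\ge 0}b_m w^{-m}$, for $r>1$ we have $b_m=\frac{1}{2\pi i}\oint_{|w|=r}\psi(w)\,w^{m-1}\,dw$. Substituting $w=\Phi(c)$, so that $\psi(w)=c$ and $dw=\Phi'(c)\,dc$, using $\Phi^{m-1}\Phi'=\tfrac1m(\Phi^m)'$ and integrating by parts around the closed image curve, and deforming the resulting contour out to a large circle $|c|=R$, we obtain
\[
b_m=-\frac{1}{m}\cdot\frac{1}{2\pi i}\oint_{|c|=R}\Phi(c)^m\,dc=-\frac{1}{m}\,[c^{-1}]\,\Phi(c)^m=-\frac{1}{m}\,[c^{-m-1}]\prod_{k\ge 1}\Bigl(1+\frac{c}{z_k(c)^2}\Bigr)^{m/2^k},
\]
where $[c^{j}]$ denotes the coefficient of $c^{j}$ in the Laurent expansion at $c=\infty$. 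This already produces the prefactor $-1/m$.

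Next I would use the hypothesis $m\le 2^{n+1}-3$ to truncate the product. Since $z_k(c)$ is monic of degree $2^{k-1}$, the series $x_k:=c\,z_k(c)^{-2}$ has only terms of $c$-degree $\le 1-2^k$; hence every term of $\prod_{k\ge n+1}(1+x_k)^{m/2^k}-1$ has $c$-degree $\le 1-2^{n+1}$, which is strictly less than $-m-1$ under the hypothesis, so these tail factors do not influence $[c^{-m-1}]$. It therefore suffices to compute $[c^{-m-1}]\prod_{k=1}^{n}(1+x_k)^{m/2^k}$. By the telescoped identity above, $x_k=c^{1-2^k}\prod_{j=1}^{k-1}(1+x_j)^{-2^{k-j}}$; starting from $x_1=c^{-1}$, this exhibits each $x_k$ as a power series in $c^{-1}$ built only from $x_1,\dots,x_{k-1}$.

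The heart of the matter is then a bookkeeping computation. Expand each factor by the generalized binomial theorem, $(1+x_k)^{m/2^k}=\sum_{j_k\ge 0}C_{j_k}(m/2^k)\,x_k^{j_k}$, and process the factors in the order $k=n,n-1,\dots,1$, at each stage replacing $x_k^{j_k}$ by $c^{(1-2^k)j_k}\prod_{j<k}(1+x_j)^{-2^{k-j}j_k}$. A downward induction shows that, just before the index-$k$ factor is expanded, the exponent of $(1+x_k)$ has been modified from $m/2^k$ to $\mu_k=\frac{m}{2^k}-\sum_{l=k+1}^{n}2^{l-k}j_l$, while the power of $c$ accumulated so far equals $\sum_{l=1}^{n}(1-2^l)j_l$. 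Consequently, extracting $[c^{-m-1}]$ forces $\sum_{k}(2^{k}-1)j_k=m+1$ and yields the coefficient $\prod_{k=1}^{n}C_{j_k}(\mu_k)$, so that
\[
b_m=-\frac{1}{m}\sum_{(j_1,\dots,j_n):\,\sum_k(2^{k}-1)j_k=m+1}\ \prod_{k=1}^{n}C_{j_k}(\mu_k).
\]
Finally, the substitution $k\mapsto n+1-k$ turns the constraint into $\sum_k(2^{n-k+1}-1)j_k=m+1$, which is the condition defining $J$, and turns $\mu_{n+1-k}$ into $\frac{m}{2^{n-k+1}}-2^{k-1}j_1-2^{k-2}j_2-\cdots-2j_{k-1}=\alpha(k)$, whence $\prod_k C_{j_k}(\mu_k)=\prod_k C_{j_k}(\alpha(k))$ and \eqref{eq:ewing-schober} follows.

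I expect the main obstacle to be this last bookkeeping step: organizing the nested substitutions so that the binomial exponent $m/2^k$ is transformed into precisely $\alpha(k)$ rather than an off-by-a-power variant, and checking that the degree estimates make both the truncation of the infinite product and each re-summation an honest identity of Laurent coefficients rather than a merely formal manipulation. A secondary point requiring care is the first step: selecting the correct branches of the fractional powers in the product for $\Phi$ and justifying the contour deformation, both of which are standard consequences of $\psi$ being a conformal isomorphism of $\Delta$ onto $\tilde M$ with $\psi(w)=w+O(1)$ near $\infty$.
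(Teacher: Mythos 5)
The paper does not prove this result: it is stated as a cited theorem of Ewing and Schober \cite{ES2}, so there is no in-paper proof to compare against. Assessed on its own merits, your derivation is correct and is essentially the derivation used in the original source. The residue manipulation is sound: with the convention $z_0=0$, $z_1=c$, telescoping $z_{n+1}^{1/2^n}/z_n^{1/2^{n-1}}=(1+cz_n^{-2})^{1/2^n}$ gives exactly $\Phi(c)=c\prod_{k\ge1}(1+cz_k^{-2})^{1/2^k}$, and the Cauchy-integral step $b_m=-\tfrac1m[c^{-1}]\Phi(c)^m$ via $\Phi^{m-1}\Phi'=\tfrac1m(\Phi^m)'$ and integration by parts on a closed contour is correct; the truncation to $k\le n$ is justified by $1-2^{n+1}<-m-1\iff m\le 2^{n+1}-3$. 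The telescoped identity $z_k=c^{2^{k-1}}\prod_{j<k}(1+cz_j^{-2})^{2^{k-1-j}}$ checks out, and the downward sweep over $k=n,\dots,1$ produces the modified exponent $\mu_k=\tfrac{m}{2^k}-\sum_{l>k}2^{l-k}j_l$ and total $c$-degree $\sum_l(1-2^l)j_l$, so that $[c^{-m-1}]$ enforces $\sum_l(2^l-1)j_l=m+1$; the relabeling $k\mapsto n+1-k$ then turns this into $J$ and turns $\mu_{n+1-k}$ into $\alpha(k)$, as you verify. Two small remarks: the phrase ``the power of $c$ accumulated so far equals $\sum_{l=1}^{n}(1-2^l)j_l$'' should read $\sum_{l=k}^{n}$ at the intermediate stage of the induction (the full sum is only accumulated after $k=1$ is processed); and it is worth stating explicitly that the convergence of the product for $\Phi$ on $|c|$ large together with the degree bound $\deg_c x_k=1-2^k$ makes all the rearrangements honest identities of Laurent series at $\infty$, which is the point you flagged as needing care.
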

Using formula (\ref{eq:ewing-schober}) to compute $b_m$ is impractical as it requires determining the set of tuples $J$, which is computationally hard.  However, since it is known that each $b_m$ is rational and has denominator equal to a power of 2, it is then useful to find a formula for its 2-adic valuation.  Towards this end, Levin \cite{L} gave such a formula when $m$ is odd, and Shimauchi \cite{S} established an upper bound valid for all $m$ with equality if and only if $m$ is odd.  

\begin{definition} Let $n$ be a non-negative integer.  We define

\noindent (a) $\nu(n)$ to be the 2-adic valuation of $n$.

\noindent (b) $s(n)$ (called the sum-of-digits function) to be the sum of the binary digits of $n$.

\end{definition}

\begin{theorem}[Levin \cite{L}, Shimauchi \cite{S}]
Let $m$ be a non-negative integer.  Then
\begin{equation}\label{eq:shimauchi}
-\nu(b_m)\leq 2(m+1)-s(2(m+1))
\end{equation}
Moreover, equality holds precisely when $m$ is odd.
\end{theorem}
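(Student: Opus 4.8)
To establish (\ref{eq:shimauchi}) and its equality case, the plan is to read the $2$-adic valuation off the Ewing--Schober formula (\ref{eq:ewing-schober}) directly. Fix an integer $n$ with $m\le 2^{n+1}-3$, write $m\,b_m=-\sum_{\mathbf j\in J}T_{\mathbf j}$ with $T_{\mathbf j}:=\prod_{k=1}^{n}C_{j_k}(\alpha(k))$, so that
\[
\nu(b_m)\;=\;-\nu(m)+\nu\Bigl(\textstyle\sum_{\mathbf j\in J}T_{\mathbf j}\Bigr)\;\ge\;-\nu(m)+\min_{\mathbf j\in J}\nu(T_{\mathbf j}).
\]
The problem then splits into a uniform lower bound on $\nu(T_{\mathbf j})$ and a non-cancellation statement for the extreme term. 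The first ingredient I would prove is a valuation formula for the generalized binomial coefficient $C_j(\alpha)$ attached to a dyadic rational $\alpha$: if $\alpha\notin\mathbb Z$ and $\nu(\alpha)=-t$ (hence $t\ge1$), then $\nu(\alpha-i)=-t$ for every integer $i$, so Legendre's identity $\nu(j!)=j-s(j)$ yields
\[
\nu\bigl(C_j(\alpha)\bigr)\;=\;-(t+1)\,j+s(j);
\]
whereas if $\alpha\in\mathbb Z$ then $C_j(\alpha)$ is an ordinary integer binomial coefficient (vanishing when $0\le\alpha<j$), so $\nu(C_j(\alpha))\ge0$.

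Next I would analyze the arguments $\alpha(k)=m/2^{\,n-k+1}-S_k$ with $S_k=2^{k-1}j_1+\cdots+2j_{k-1}\in\mathbb Z_{\ge0}$. Putting $v:=\nu(m)$, one checks that $\alpha(k)\in\mathbb Z$ precisely when $n-k+1\le v$, and otherwise $\nu(\alpha(k))=v-(n-k+1)$. Applying the lemma factor by factor and discarding the non-negative contributions of the integer factors gives $\nu(T_{\mathbf j})\ge -F(\mathbf j)$, where after the reindexing $\ell:=n-k+1$, $c_\ell:=j_{\,n-\ell+1}$ — under which the defining relation of $J$ becomes $\sum_{\ell\ge1}(2^\ell-1)c_\ell=m+1$ —
\[
F(\mathbf j)\;:=\;\sum_{\ell>v}\bigl[(\ell+1-v)\,c_\ell-s(c_\ell)\bigr].
\]
The distinguished tuple is $\mathbf j^{\ast}:=(0,\dots,0,m+1)$ (that is, $c_1=m+1$ and $c_\ell=0$ for $\ell\ge2$), which always lies in $J$; for it $T_{\mathbf j^{\ast}}=C_{m+1}(m/2)$, so the lemma gives $\nu(T_{\mathbf j^{\ast}})=-2(m+1)+s(m+1)=-\bigl(2(m+1)-s(2(m+1))\bigr)$ when $m$ is odd (using $s(2(m+1))=s(m+1)$), while $T_{\mathbf j^{\ast}}=0$ when $m$ is even.

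The core of the argument is the combinatorial estimate
\[
F(\mathbf j)\;\le\;2(m+1)-s(2(m+1))-\nu(m)\qquad(\mathbf j\in J),
\]
which is exactly what produces (\ref{eq:shimauchi}), together with a strictness refinement. I would prove it by comparing $F(\mathbf j)$ coefficientwise with $2(m+1)=2\sum_\ell(2^\ell-1)c_\ell$ and absorbing the digit-sum corrections via the subadditivity bound $s(m+1)\le\sum_\ell s\bigl((2^\ell-1)c_\ell\bigr)\le\sum_\ell\ell\,s(c_\ell)$ and the trivial bound $s(c_\ell)\le c_\ell$; this reduces everything to
\[
\sum_{\ell\le v}\bigl(2^{\ell+1}-\ell-2\bigr)c_\ell\;+\;\sum_{\ell>v}\bigl(2^{\ell+1}-2\ell-2+v\bigr)c_\ell\;\ge\;v .
\]
For $m$ \emph{even} ($v\ge1$) every coefficient on the left is $\ge2^\ell-1$, so the left side is $\ge\sum_\ell(2^\ell-1)c_\ell=m+1>v$; tracing the constants back gives $F(\mathbf j)\le(m+1)-s(m+1)$ and hence $\nu(b_m)\ge-\nu(m)-(m+1)+s(m+1)>-\bigl(2(m+1)-s(2(m+1))\bigr)$ because $\nu(m)<m+1$, so the inequality is \emph{strict}. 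For $m$ \emph{odd} ($v=0$) the $\ell=1$ coefficient degenerates to $0$ and the left side collapses to $\sum_{\ell\ge2}2\bigl(2^\ell-\ell-1\bigr)c_\ell\ge0$, an equality exactly when $c_\ell=0$ for all $\ell\ge2$, i.e.\ only at $\mathbf j=\mathbf j^{\ast}$. Therefore $\mathbf j^{\ast}$ is the unique minimizer of $\nu(T_{\mathbf j})$, so no cancellation occurs and $\nu(b_m)=-\bigl(2(m+1)-s(2(m+1))\bigr)$ exactly; the small values of $m$, and $m=0$ (where (\ref{eq:ewing-schober}) does not literally apply), are checked directly.

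The step I expect to be the genuine obstacle is this last combinatorial estimate, because the naive version of the argument only yields $F(\mathbf j)=O(m)$ and misses the digit-sum term $-s(2(m+1))$; recovering the sharp constant forces one to keep precise track of the $s(c_\ell)$ contributions coming from Legendre's formula and of the slack in subadditivity of $s$, and it is exactly this bookkeeping that makes $\mathbf j^{\ast}$ emerge as the \emph{unique} extremal tuple when $m$ is odd — which is what pins down equality in (\ref{eq:shimauchi}) — and forces it to drop out as a vanishing term when $m$ is even.
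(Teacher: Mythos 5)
The paper cites this result to Levin and Shimauchi and gives no proof of its own, so there is nothing in the text to compare against directly; your argument must be judged on its own merits. It does check out, and in fact it is a generalisation to arbitrary $\nu(m)$ of the machinery the authors build in Section~2 (only for $\nu(m)=1$) to attack Zagier's conjecture: your valuation formula $\nu\bigl(C_j(\alpha)\bigr)=-(t+1)j+s(j)$ when $\nu(\alpha)=-t<0$ is exactly Lemma~\ref{le:2-adic-case-1} once one notices that the paper's Lemma~2.2 as printed (``$\nu(\beta-p\cdot 2^{n-k+1})=1$'') tacitly assumes $\nu(m)=1$ and should read $\nu(B(k))=j_k\,\nu(m)$ in general; likewise your reindexed constraint $\sum_\ell(2^\ell-1)c_\ell=m+1$ and the weight $F(\mathbf j)=\sum_{\ell>v}[(\ell+1-v)c_\ell-s(c_\ell)]$ are the same object as the paper's $v(m,\mathbf j)$.

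The combinatorial estimate you single out as the crux does go through as you outline. Expanding $2(m+1)-\nu(m)-F(\mathbf j)-s(m+1)$ with $2(m+1)=2\sum_\ell(2^\ell-1)c_\ell$, applying $s(m+1)\le\sum_\ell\ell\,s(c_\ell)$, and then $s(c_\ell)\le c_\ell$ wherever the sign permits, one lands exactly on
\[
\sum_{\ell\le v}\bigl(2^{\ell+1}-\ell-2\bigr)c_\ell+\sum_{\ell>v}\bigl(2^{\ell+1}-2\ell-2+v\bigr)c_\ell\ \ge\ v .
\]
For $v\ge1$ each coefficient is $\ge 2^\ell-1$ (from $2^\ell\ge\ell+1$ and $2^\ell\ge 2\ell$), so the left side is $\ge m+1>v$, and tracing constants back gives $F(\mathbf j)\le(m+1)-s(m+1)$, hence $-\nu(b_m)\le\nu(m)+(m+1)-s(m+1)<2(m+1)-s(2(m+1))$: strictness for $m$ even. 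For $v=0$ the $\ell=1$ coefficient vanishes and all higher ones are strictly positive, so the unique maximiser of $F$ is $c_1=m+1$, i.e.\ $\mathbf j^{\ast}$; and since for $m$ odd every $\alpha(k)$ is non-integral, $\nu(T_{\mathbf j})=-F(\mathbf j)$ is an equality, so uniqueness of the maximiser rules out cancellation in $\sum_{\mathbf j}T_{\mathbf j}$ and yields equality, $-\nu(b_m)=2(m+1)-s(m+1)=2(m+1)-s(2(m+1))$.

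One caveat worth flagging, though it is a defect of the theorem statement as quoted rather than of your proof: $b_0=-\tfrac12$ gives $-\nu(b_0)=1=2(1)-s(2)$, so equality also holds at the even value $m=0$, where formula~(\ref{eq:ewing-schober}) does not apply. The ``precisely when $m$ is odd'' claim should be read with $m\ge1$, which is the range your argument actually covers.
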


In this paper we prove Zagier's conjecture (see \cite{BFH}) regarding a formula for the 2-adic valuation of $b_m$ when $m\equiv 2 \mod 4$.  
\begin{theorem}[Zagier's Conjecture \cite{BFH}] \label{th:zagier}
Suppose $m\equiv 2 \mod 4$. Then
\begin{equation}
    -\nu(b_{m}) = \Bigl\lfloor \frac{2}{3}(m+1)\Bigr\rfloor - s\left(\Bigl\lfloor\frac{2}{3}(m+1)\Bigr\rfloor\right) +\epsilon(m),
\end{equation}
where
\begin{equation}
    \epsilon(m) = 
    \begin{cases}
        0, & \mbox{if } m \equiv 22 \mod 24;\\
        1, & \mbox{otherwise}.
    \end{cases}
\end{equation}
\end{theorem}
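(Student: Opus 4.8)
The plan is to apply the Ewing--Schober formula~\eqref{eq:ewing-schober} with a convenient large $n$ (any $n$ with $m\le 2^{n+1}-3$; the dominant contributions turn out not to depend on $n$), write $T_{\mathbf j}=\prod_{k=1}^{n}C_{j_k}(\alpha(k))$ and $S=\sum_{\mathbf j\in J}T_{\mathbf j}$, and reduce to computing $\nu(S)$: since $m\equiv 2\bmod 4$ we have $\nu(m)=1$, hence $-\nu(b_m)=1-\nu(S)$. The first step is to evaluate $\nu(T_{\mathbf j})$ termwise. Because $\nu(m)=1$, for $1\le k\le n-1$ the number $\alpha(k)=m/2^{\,n-k+1}-(\text{integer})$ is a dyadic rational with $\nu(\alpha(k))=k-n<0$, so $\nu(\alpha(k)-i)=k-n$ for all integers $i\ge 0$; using Legendre's formula $\nu(i!)=i-s(i)$ this gives
\[
\nu\big(C_{j_k}(\alpha(k))\big)=-(n-k+1)\,j_k+s(j_k)\qquad(1\le k\le n-1).
\]
For $k=n$, the number $\alpha(n)=\tfrac{m}{2}-2^{n-1}j_1-\cdots-2j_{n-1}$ is an odd integer (since $m/2$ is odd), and counting the even factors among $\alpha(n),\alpha(n)-1,\dots,\alpha(n)-j_n+1$ shows $\nu\big(C_{j_n}(\alpha(n))\big)\ge 0$. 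Hence
\[
-\nu(T_{\mathbf j})=\sum_{k=1}^{n-1}\big[(n-k+1)\,j_k-s(j_k)\big]-\nu\big(C_{j_n}(\alpha(n))\big).
\]

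Next I would determine $M:=\max_{\mathbf j\in J}\big({-}\nu(T_{\mathbf j})\big)$. In the defining constraint of $J$ the variable $j_k$ is weighted by $2^{n-k+1}-1$, and the ratio $\tfrac{n-k+1}{2^{n-k+1}-1}$ is maximal ($=\tfrac23$) at $k=n-1$, whose weight is $3$; so the extremal tuples put almost all of $m+1$ onto $j_{n-1}$, with the residue $(m+1)\bmod 3$ absorbed by $j_n$ (weight $1$) and, in certain residue classes, a single unit moved into $j_{n-2}$. A case analysis on $m\bmod 3$ --- refined, when $m\equiv 1\bmod 3$, by $m\bmod 8$ and $m\bmod 16$ so as to evaluate $\nu\big(C_{j_n}(\alpha(n))\big)$ exactly (for the main tuple it equals $\nu(m+2)-2$, which is the source of the correction $\epsilon(m)$ and forces a unit into $j_{n-2}$ precisely when $m\equiv 22\bmod 24$ and $m\not\equiv 6\bmod 16$) --- yields, after simplifying with $\nu(\ell!)=\ell-s(\ell)$ and the identities $s(2\ell)=s(\ell)$ and $s(2\ell+1)=s(\ell)+1$,
\[
M=\Bigl\lfloor\tfrac23(m+1)\Bigr\rfloor-s\Bigl(\Bigl\lfloor\tfrac23(m+1)\Bigr\rfloor\Bigr)+\epsilon(m)-1.
\]

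Since $\nu(S)\ge\min_{\mathbf j}\nu(T_{\mathbf j})=-M$, the remaining step is to prove equality, i.e.\ that the minimal-valuation terms do not all cancel $2$-adically. I would list the finitely many tuples $\mathbf j$ attaining $-\nu(T_{\mathbf j})=M$ (in general more than one --- for $m=22$ there are exactly three), compute the odd part of $2^{M}T_{\mathbf j}$ for each directly from its binomial factors, and check that the sum of these odd parts is odd; this gives $\nu\big(\sum_{\text{extremal}}T_{\mathbf j}\big)=-M$, hence $\nu(S)=-M$, hence $-\nu(b_m)=1-\nu(S)=1+M$, which is the claimed identity.

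The main obstacle is this optimization together with the non-cancellation. The efficiency heuristic identifies the leading term at once, but making it rigorous requires controlling both the digit-sum penalties $-s(j_k)$ and the integer-binomial contribution $\nu\big(C_{j_n}(\alpha(n))\big)$, and it is the latter that both produces $\epsilon(m)$ and creates the delicate subcase $m\equiv 22\bmod 24$, where the optimal tuple is forced to carry weight in $j_{n-2}$ and where several extremal tuples coexist; showing that these do not conspire to cancel --- as two of the three do when $m=22$, fortunately leaving an odd residue --- is the technical heart of the argument.
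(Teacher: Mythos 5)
Your proposal is correct and follows essentially the same approach as the paper: you compute $\nu$ of each product $T_{\mathbf j}$ via Legendre/Kummer (matching the paper's Lemmas giving $-\nu(C_{j_k}(\alpha(k)))=(n-k+1)j_k-s(j_k)$ for $k<n$ and the carry formula for $k=n$), locate the valuation-maximizing tuples by pushing weight onto $j_{n-1}$ (the paper's tuple transformations make your efficiency heuristic rigorous), split into the same residue cases (your $m\bmod 16$ refinement is equivalent to the paper's $m\bmod 48$ once $m\equiv 22\bmod 24$ is fixed), and finish by showing the extremal terms do not cancel $2$-adically.
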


Our proof relies on determining those tuples $\mathbf{j}_{\max}\in J$ that maximize $V(\mathbf{j}):=-\nu\left(\prod_{k=1}^{n}C_{j_k}(\alpha(k))\right)$, i.e., $V(\mathbf{j}) < V(\mathbf{j}_{\max})$ for all $\mathbf{j} \in J$.  In particular, we show for $m\equiv 2 \mod 4$ that this largest 2-adic valuation  $V(\mathbf{j}_{\max})$ is achieved by exactly one tuple $\mathbf{j}_{\max}$ or else by exactly three tuples $\mathbf{j}_{\max}$, $\mathbf{j}'_{\max}$, $\mathbf{j}''_{\max}$ in the special case where $m\equiv 22 \mod 24$.  To prove that $V(\mathbf{j}) < V(\mathbf{j}_{\max})$ for all $\mathbf{j} \in J$, we derive lemmas to compare the values of $V(\mathbf{j})$ for different types of tuples.  For example,  if $m=38$, then it holds that
\[
V((2,1,0,2)) < V((0,5,1,1)) < V((0,0,13,0)),
\]
where $\mathbf{j}_{\max}=(0,0,13,0)$.
We refer to the chain of tuples
\[
(2,1,0,2) \rightarrow (0,5,1,1) \rightarrow \mathbf{j}_{\max}
\]
 as a set of tuple transformations.

As a result of our comparison lemmas (derived in Sections 2 and 3), we have the result
\begin{equation}
-\nu(b_m)=1+ V(\mathbf{j}_{\max}).
\end{equation}
This follows from the fact that the 2-adic valuation of the sum of any number of fractions (whose denominators are powers of 2 and whose numerators are odd) is equal to the largest 2-adic valuation of all the fractions, assuming that there are an odd number of fractions with the same largest 2-adic valuation.  It remains to calculate $V(\mathbf{j}_{\max})$ in each case, which then establishes Zagier's conjecture.

\end{section}

\begin{section}{Tuple Transformations}

We begin with preliminary definitions.

\begin{definition}\label{de:beta}
    Given $\mathbf{j}\in J$, define 
    \[\beta_{\mathbf{j}}(k):=\beta(k):=\beta=2^{n-k+1}\alpha(k)=m-2^n j_1 - 2^{n-1} j_2 - \cdots - 2^{n-k+2} j_{k-1}\] 
    and 
    \[B(k)=\beta(\beta-2^{n-k+1})(\beta-2\cdot 2^{n-k+1})\cdots (\beta-(j_k-1)\cdot 2^{n-k+1}).\]
\end{definition}

\begin{lemma} We have
\[\nu(B(k))=j_k\]
 for $1\leq k \leq n-\nu(m)$.
\end{lemma}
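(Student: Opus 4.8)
The plan is to evaluate $\nu(B(k))$ by working out the $2$-adic valuation of each factor in the product defining it. By Definition \ref{de:beta}, $B(k)$ is the product of the $j_k$ integers
\[
\beta(k) - i\cdot 2^{\,n-k+1}, \qquad i = 0, 1, \ldots, j_k - 1,
\]
where for $j_k = 0$ this is read as the empty product $1$, so that $\nu(B(k)) = 0 = j_k$ and there is nothing to prove. Since $\nu(xy) = \nu(x) + \nu(y)$, it suffices to show that every such factor satisfies $\nu\bigl(\beta(k) - i\cdot 2^{\,n-k+1}\bigr) = 1$.

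First I would pin down $\nu(\beta(k))$. In the expression $\beta(k) = m - 2^{n} j_1 - 2^{n-1} j_2 - \cdots - 2^{\,n-k+2} j_{k-1}$, every term other than $m$ is divisible by $2^{\,n-k+2}$, so $\beta(k) \equiv m \pmod{2^{\,n-k+2}}$. The hypothesis $k \le n - \nu(m)$ rewrites as $n - k + 1 \ge \nu(m) + 1$, and since we are in the case $m \equiv 2 \mod 4$, i.e. $\nu(m) = 1$, this gives $n - k + 1 \ge 2$ and in particular $\nu(m) = 1 < n - k + 2$. Hence $\nu(\beta(k)) = \nu(m) = 1$, that is, $\beta(k) \equiv 2 \pmod 4$.

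Because $n - k + 1 \ge 2$, each shift $2^{\,n-k+1}$ occurring in $B(k)$ is a multiple of $4$, so $\beta(k) - i\cdot 2^{\,n-k+1} \equiv \beta(k) \equiv 2 \pmod 4$ for all $i$, and therefore each of the $j_k$ factors has $2$-adic valuation exactly $1$. Adding up the valuations gives $\nu(B(k)) = j_k$.

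I do not expect a genuine obstacle here; the content of the lemma is really the observation that the hypothesis $k \le n - \nu(m)$ is exactly what keeps the modulus $2^{\,n-k+2}$ (which controls $\beta(k)$) and the step size $2^{\,n-k+1}$ (which separates successive factors) too large to disturb the $2$-adic valuation inherited from $m$. Without the standing assumption $m \equiv 2 \mod 4$ the same argument would instead produce $\nu(B(k)) = j_k\,\nu(m)$, so the value $j_k$ in the statement is precisely where $\nu(m)=1$ enters.
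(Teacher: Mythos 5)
Your proof is correct and follows essentially the same route as the paper: reduce $\beta(k) \bmod 2^{n-k+2}$ to get $\nu(\beta(k)) = \nu(m)$, observe that the shifts $i\cdot 2^{n-k+1}$ are small enough $2$-adically to leave the valuation of each factor undisturbed, and add. Your closing remark that the stated value $j_k$ (rather than $j_k\,\nu(m)$) uses the standing assumption $\nu(m)=1$ is a useful clarification, since the paper's proof asserts $\nu(\beta - p\cdot 2^{n-k+1}) = 1$ without flagging that this relies on $m \equiv 2 \bmod 4$.
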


\begin{proof}
    First, we establish that $\nu(\beta(k)) = \nu(m)$ for  $1 \leq k \leq n -\nu(m)$. This follows from
    \begin{align*}
        \nu(\beta) &= \nu(m-2^n j_1 - 2^{n-1} j_2 - \cdots - 2^{n-k+2} j_{k-1})\\
        &= \nu(m - (2^n j_1 - 2^{n-1} j_2 - \cdots - 2^{n-k+2} j_{k-1}))\\
        &= \nu(m),
    \end{align*}
which holds since $\nu(2^n j_1 - 2^{n-1} j_2 - \cdots - 2^{n-k+2} j_{k-1}) \geq n-k+2 > \nu(m)$. Then by definition we have
    \begin{equation*}
        B(k) = \beta(\beta-d^{n-k+1})(\beta-2d^{n-k+1})\cdots (\beta-(j_k-1)d^{n-k+1})
    \end{equation*}
    Taking the 2-adic valuation of both sides and expanding the right-hand side gives
    \begin{align*}
        \nu(B(k)) &= \nu(\beta(\beta-2^{n-k+1})(\beta-2\cdot 2^{n-k+1})\cdots (\beta-(j_k-1)2^{n-k+1}))\\
        &= \nu(\beta) + \nu(\beta-2^{n-k+1}) + \nu(\beta-2\cdot 2^{n-k+1}) + \cdots + \nu(\beta-(j_k-1)2^{n-k+1})
    \end{align*}
    Since $n-k+1>\nu(m)$, $\nu(\beta - p\cdot 2^{n-k+1}) = 1$ for all integers $p$. Thus \[\nu(\beta) + \nu(\beta-2^{n-k+1}) + \nu(\beta-2(2^{n-k+1})) + \cdots + \nu(\beta-(j_k-1)2^{n-k+1}) = 1 + 1 + \dots + 1\] where there are $j_k$ 1's.  Thus,
    $\nu(B(k)) = j_k$ as desired.
\end{proof}

\begin{lemma}\label{le:2-adic-case-1} We have
\begin{equation}
-\nu(C_{j_k}(\alpha(k))= (n-k+1)j_k - s(j_k)
\end{equation}
for $1\leq k \leq n-\nu(m)$
\end{lemma}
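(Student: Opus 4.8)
The plan is to reduce the statement to the previous lemma together with Legendre's formula for the $2$-adic valuation of a factorial. First I would rewrite each factor in the numerator of $C_{j_k}(\alpha(k))$ in terms of $\beta(k)$. Since $\beta(k) = 2^{n-k+1}\alpha(k)$ by Definition \ref{de:beta}, for every integer $p$ we have
\[
\alpha(k) - p = \frac{\beta(k) - p\cdot 2^{n-k+1}}{2^{n-k+1}}.
\]
Multiplying these identities for $p = 0, 1, \dots, j_k - 1$ shows that the numerator $\alpha(\alpha-1)\cdots(\alpha-(j_k-1))$ equals $B(k)/\bigl(2^{n-k+1}\bigr)^{j_k}$, so that
\[
C_{j_k}(\alpha(k)) = \frac{B(k)}{\bigl(2^{n-k+1}\bigr)^{j_k}\, j_k!}.
\]

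Next I would simply take the $2$-adic valuation of both sides, using that $\nu$ is additive on products and quotients of nonzero rationals:
\[
\nu\bigl(C_{j_k}(\alpha(k))\bigr) = \nu(B(k)) - (n-k+1)j_k - \nu(j_k!).
\]
Here is where the hypothesis $1 \le k \le n - \nu(m)$ enters: it is exactly the range in which the previous lemma applies, giving $\nu(B(k)) = j_k$. For the last term I would invoke Legendre's formula $\nu(j_k!) = j_k - s(j_k)$. Substituting both into the displayed equation yields
\[
\nu\bigl(C_{j_k}(\alpha(k))\bigr) = j_k - (n-k+1)j_k - (j_k - s(j_k)) = -(n-k+1)j_k + s(j_k),
\]
and negating both sides gives the claimed identity $-\nu\bigl(C_{j_k}(\alpha(k))\bigr) = (n-k+1)j_k - s(j_k)$.

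I do not expect any real obstacle here; the argument is essentially a bookkeeping computation once the factorization $C_{j_k}(\alpha(k)) = B(k)/\bigl((2^{n-k+1})^{j_k} j_k!\bigr)$ is in hand. The only point requiring a little care is to confirm that the index range in the statement coincides with the range in which the previous lemma guarantees $\nu(B(k)) = j_k$, so that the substitution is legitimate; outside that range $\nu(B(k))$ can exceed $j_k$ and the clean formula fails.
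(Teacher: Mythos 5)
Your proof is correct and follows essentially the same route as the paper: rewrite $C_{j_k}(\alpha(k))$ as $B(k)/\bigl(2^{j_k(n-k+1)} j_k!\bigr)$, take $\nu$ of both sides, apply the preceding lemma's identity $\nu(B(k)) = j_k$ on the stated index range, and use Legendre's formula $\nu(j_k!) = j_k - s(j_k)$.
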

\begin{proof}
 It is clear from Definition \ref{de:beta} that
\begin{align*}
    C_{j_k}(\alpha(k)) &= \frac{\alpha(\alpha-1)(\alpha-2)\cdots (\alpha-(j_k-1))}{j_k!} \\
    &= \frac{\beta(\beta-2^{n-k+1})(\beta-2\cdot 2^{n-k+1})\cdots (\beta-(j_k-1)2^{n-k+1})}{2^{j_k(n-k+1)}j_k!} \\
    &= \frac{B(k)}{2^{j_k(n-k+1)}j_k!}
\end{align*}
and thus
\begin{align*}
-\nu(C_{j_k}(\alpha(k))    &= -\nu\left(\frac{B(k)}{2^{j_k(n-k+1)}j_k!}\right) \\
    &= -(\nu(B(k) - \nu(2^{j_k(n-k+1)}j_k!)) \\
    &= (n-k+1)j_k + j_k - s(j_k) - \nu(B(k)) \\
    &= (n-k+1)j_k - s(j_k)
\end{align*}
since we have from Lemma \ref{le:2-adic-case-1} that $\nu(B(k)) = j_k$ for $1 \leq k \leq n - \nu(m)$. 
\end{proof}

We now consider the case where $k > n - \nu(m)$.  Define $c(x,y)$ to be the number of carries performed when summing two non-negative integers $x$ and $y$ in binary.  It is a well known result that 
\[
c(x,y) = s(x)+s(y)-s(x+y).
\]

\begin{lemma} \label{le:2-adic-case-2}
    Let $\mathbf{j}\in J$.  Then for $k > n - \nu(m)$, we have
    \begin{equation}
        -\nu(C_{j_k}(\alpha(k))) =
        \begin{cases}
            -c(j_k,-\alpha(k)-1), & \alpha(k) <0; \\
            -\infty, & 0\leq \alpha(k)\leq j_k; \\
            c(j_k, \alpha(k)-j_k), & \alpha(k)>j_k.
        \end{cases}
    \end{equation}
\end{lemma}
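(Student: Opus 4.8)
The plan is to isolate the phenomenon that separates this regime from Lemma~\ref{le:2-adic-case-1}: once $k>n-\nu(m)$, the rational number $\alpha(k)$ is in fact an \emph{integer}, so that $C_{j_k}(\alpha(k))$ is, up to sign, an ordinary binomial coefficient whose $2$-adic valuation is counted by Kummer's theorem (equivalently, by Legendre's formula $\nu(N!)=N-s(N)$ together with the carry identity $c(x,y)=s(x)+s(y)-s(x+y)$ recorded above).

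\textbf{Step 1 (integrality of $\alpha(k)$).} Recall from Definition~\ref{de:beta} that $\alpha(k)=\beta(k)/2^{n-k+1}$ with $\beta(k)=m-2^{n}j_{1}-2^{n-1}j_{2}-\cdots-2^{n-k+2}j_{k-1}$. Since $k>n-\nu(m)$ we have $n-k+1\le\nu(m)$, hence $2^{n-k+1}\mid m$; moreover each subtracted term $2^{n-i+1}j_{i}$ with $1\le i\le k-1$ has exponent $n-i+1\ge n-k+2>n-k+1$. Therefore $2^{n-k+1}\mid\beta(k)$, so $\alpha:=\alpha(k)\in\mathbb{Z}$.

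\textbf{Step 2 (reduction to binomial coefficients and Kummer).} Write $j=j_{k}$, so $C_{j}(\alpha)=\alpha(\alpha-1)\cdots(\alpha-j+1)/j!$. If $\alpha\in\{0,1,\dots,j-1\}$ then some factor $\alpha-i$ vanishes (take $i=\alpha$), so $C_{j}(\alpha)=0$ and $-\nu(C_{j}(\alpha))=-\infty$ (at the endpoint $\alpha=j$ the product equals $j!$, so $C_{j}(j)=1$, consistent with the third formula $c(j,0)=0$, so the stated interval $0\le\alpha\le j$ overlaps harmlessly with the third case). If $\alpha>j$ then $C_{j}(\alpha)=\binom{\alpha}{j}=\binom{j+(\alpha-j)}{j}$ is a positive integer. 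If $\alpha<0$, factoring a $-1$ out of each of the $j$ negative factors gives $C_{j}(\alpha)=(-1)^{j}\binom{(-\alpha-1)+j}{j}$, so $|C_{j}(\alpha)|=\binom{(-\alpha-1)+j}{j}$ with $-\alpha-1\ge 0$. Now Kummer's theorem gives $\nu\!\big(\binom{x+y}{x}\big)=c(x,y)$ for non-negative integers $x,y$; applying it with $(x,y)=(j,\alpha-j)$ in the case $\alpha>j$ and with $(x,y)=(j,-\alpha-1)$ in the case $\alpha<0$, and negating, yields the two finite expressions for $-\nu(C_{j_k}(\alpha(k)))$ in the statement.

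\textbf{Expected obstacle.} There is no genuine difficulty here: the substance is the integrality of Step~1 (which is precisely what fails, producing the non-integer behavior, once $k\le n-\nu(m)$), plus routine case bookkeeping. The points deserving a little care are the sign extraction when $\alpha<0$, the degenerate case $j_{k}=0$ (where $C_{j_k}(\alpha)=1$ and every branch collapses to $0$ correctly), and checking consistency at the boundaries $\alpha=0$ and $\alpha=j_{k}$ between the three ranges.
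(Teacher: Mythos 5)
Your proof is correct and follows essentially the same route as the paper: establish integrality of $\alpha(k)$ from $2^{n-k+1}\mid m$ and then compute the $2$-adic valuation of the resulting integer binomial coefficient via Legendre/Kummer (you cite Kummer's theorem directly where the paper unwinds $\nu(N!)=N-s(N)$ by hand, but these are the same computation). Your remark about the boundary $\alpha(k)=j_k$ is a genuine catch: there $C_{j_k}(j_k)=1$ so the valuation is $0$, not $-\infty$, meaning the second case in the lemma should really read $0\le\alpha(k)<j_k$ (or, as you observe, $\alpha(k)=j_k$ can be absorbed into the third case since $c(j_k,0)=0$), an imprecision present in both the lemma statement and the paper's own proof.
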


\begin{proof}
First, we demonstrate that $\alpha(k)$ is an intger when $k>n-\nu(m)$. By definition, we have
    
\[\alpha(k) = \frac{m}{2^{n-k+1}}-2^{k-1}j_1 - 2^{k-2}j_2 - \ldots - 2 j_{k-1}.\] 
Since $\nu(m) \geq n-k+1$, it follows that $m$ is divisible by $2^{n-k+1}$. Thus, $\frac{m}{2^{n-k+1}}$ is an integer, and since the remaining terms are all integers, $\alpha(k)$ must be an integer as well.  

If $\alpha(k) < 0$, we have
    \begin{align*}
        -\nu(C_{j_k}(\alpha(k))) &= -\nu\left(\frac{\alpha(\alpha - 1)\dots(\alpha - j_k + 1)}{j_k!}\right)\\
        &= j_k - s(j_k) - \nu((\alpha - j_k + 1) \dots (\alpha - 1)\alpha)\\
        &= j_k - s(j_k) - (\nu((-\alpha - j-k + 1)!) - \nu(-\alpha - 1)) \\
        &= -s(j_k) - s(-\alpha - 1) + s(-\alpha-1+j_k)\\
        &= -c(j_k, -\alpha - 1).
    \end{align*}
    On the other hand, if $0 \leq \alpha(k)$, then $C_{j_k}(\alpha(k)) = 0$, and therefore $\nu(C_{j_k}) = \infty$. Lastly, if $\alpha(k) > j_k$, then we have
    \begin{align*}
        -\nu(C_{j_k}(\alpha)) &= -\nu\left(\frac{\alpha!}{(\alpha-j_k)!j_k!}\right)\\
        &= \alpha - s(\alpha) - (\alpha-j_k) + s(\alpha-j_k) - j_k + s(j_k)  \\
        &= s(j_k) + s(\alpha-j_k) - s(\alpha) \\
        &= c(j_k, \alpha(k)-j_k)
    \end{align*}
    as desired.
\end{proof}

\begin{definition} \label{de:V}
    For convenience, define
    \begin{equation}
        \gamma(m,k):=\gamma(k) = 
        \begin{cases}
            -c(j_k,-\alpha(k)-1), & \alpha(k) <0; \\
            \infty, & 0\leq \alpha(k)\leq j_k; \\
            c(j_k, \alpha(k)-j_k), & \alpha(k)>j_k,
        \end{cases}
    \end{equation}
    and for any tuple $\mathbf{j}\in J$, define
    \begin{equation}
    v(m, \mathbf{j}) = \sum_{k=1}^{n-\nu(m)} [(n-k+1)j_k - s(j_k)]
    \end{equation}
    and
    \begin{equation}
    V(m,\mathbf{j}):=V(\mathbf{j}) = -\nu\left(\prod_{k=1}^{n}C_{j_k}(\alpha(k))\right).
    \end{equation}
    In the case where $m\equiv 2 \mod 4$ so that $\nu(m)=1$, we shall simply write 
    \begin{equation}
     v(\mathbf{j}):=v(m, \mathbf{j}) = \sum_{k=1}^{n-1} [(n-k+1)j_k - s(j_k)].
    \end{equation}
    
\end{definition}
\end{section}

The next lemma follows immediately from Definition \ref{de:V} and Lemmas \ref{le:2-adic-case-1} and \ref{le:2-adic-case-2}.
\begin{lemma}\label{le:V(j)}
    We have 
    \[V(\mathbf{j}) =v(m,\mathbf{j}) + \sum_{k=n-\nu(m)+1}^n \gamma(k)\]
    and in particular if $m\equiv 2 \mod 4$, then
    \begin{equation}
    V(\mathbf{j}) = v(\mathbf{j}) + \gamma(n).
    \end{equation}
\end{lemma}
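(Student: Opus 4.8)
The plan is simply to unwind the definition of $V(\mathbf{j})$ by additivity of the $2$-adic valuation over products and then to invoke the two preceding lemmas on the two complementary ranges of the index $k$. First I would note that each factor $C_{j_k}(\alpha(k))$ is a rational number whose valuation has already been computed, so that
\[
V(\mathbf{j}) = -\nu\!\left(\prod_{k=1}^{n} C_{j_k}(\alpha(k))\right) = \sum_{k=1}^{n} \bigl(-\nu(C_{j_k}(\alpha(k)))\bigr),
\]
with the usual conventions governing $\pm\infty$ when some factor vanishes. I would then split the sum at the threshold $k = n - \nu(m)$, writing
\[
V(\mathbf{j}) = \sum_{k=1}^{n-\nu(m)} \bigl(-\nu(C_{j_k}(\alpha(k)))\bigr) + \sum_{k=n-\nu(m)+1}^{n} \bigl(-\nu(C_{j_k}(\alpha(k)))\bigr).
\]

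For the first sum, Lemma \ref{le:2-adic-case-1} applies on exactly the range $1 \le k \le n - \nu(m)$ and gives $-\nu(C_{j_k}(\alpha(k))) = (n-k+1)j_k - s(j_k)$; summing these terms reproduces precisely the quantity $v(m,\mathbf{j})$ of Definition \ref{de:V}. For the second sum, Lemma \ref{le:2-adic-case-2} covers the range $k > n-\nu(m)$, and comparing its three-case formula with the definition of $\gamma(k)$ in Definition \ref{de:V} shows that $-\nu(C_{j_k}(\alpha(k))) = \gamma(k)$ for each such $k$, the degenerate case $0 \le \alpha(k) \le j_k$ being the one where $C_{j_k}(\alpha(k)) = 0$, recorded here by convention. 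Hence the second sum equals $\sum_{k=n-\nu(m)+1}^{n} \gamma(k)$, and adding the two pieces yields the asserted identity $V(\mathbf{j}) = v(m,\mathbf{j}) + \sum_{k=n-\nu(m)+1}^{n}\gamma(k)$.

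For the specialization to $m \equiv 2 \mod 4$ I would then observe that $\nu(m) = 1$, so the summation range $n-\nu(m)+1 \le k \le n$ collapses to the single index $k = n$; the tail sum therefore reduces to $\gamma(n)$, while $v(m,\mathbf{j})$ becomes the abbreviated $v(\mathbf{j})$ of Definition \ref{de:V}, giving $V(\mathbf{j}) = v(\mathbf{j}) + \gamma(n)$. I do not expect a genuine obstacle here: the statement is essentially a relabeling, and the only points that merit a moment's care are verifying that the two pieces of the index split coincide exactly with the ranges of validity of Lemmas \ref{le:2-adic-case-1} and \ref{le:2-adic-case-2}, and keeping the $\pm\infty$ bookkeeping consistent for those tuples in which a binomial factor vanishes.
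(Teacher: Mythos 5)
Your proof is correct and fills in exactly the details the paper leaves implicit — the paper simply states that the lemma "follows immediately from Definition \ref{de:V} and Lemmas \ref{le:2-adic-case-1} and \ref{le:2-adic-case-2}." Your expansion via additivity of the $2$-adic valuation, the split of the index range at $k = n - \nu(m)$, and the specialization to $\nu(m) = 1$ is precisely the intended argument.
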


We now consider tuple transformations that allow us to compare $v(m,\mathbf{j})$ for different types of tuples.

\begin{lemma} \label{le:tuple-transformation-1}
    Suppose $\nu(m)\geq 1$.  Let $\mathbf{j}$ be a J-tuple and $i < n-\nu(m)$ be such that $j_i \neq 0$.  Define the tuple $\mathbf{j}'=(j_1',\ldots,j_n')$ by
    \begin{align*}
    j_k' = 
    \begin{cases}
    j_k, & k \neq i, i+1, n  \\ 
    j_i - r, & k=i \\
    j_{i+1}+p, & k=i+1 \\
    j_n+q, & k=n
    \end{cases}
    \end{align*}
     where $r$ is the largest power of 2 less than $j_i$, and $p$ and $q$ satisfy 
     \begin{equation}\label{eq:pq}
     (2^{n-i}-1)p + q = (2^{n-i+1}-1)r
     \end{equation}
      with $q < 2^{n-i}-1$.  Then
    \[
    v(m,\mathbf{j})<v(m,\mathbf{j}').
    \]
\end{lemma}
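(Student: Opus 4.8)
The plan is to compare $v(m,\mathbf{j})$ and $v(m,\mathbf{j}')$ coordinate by coordinate. Since $i<n-\nu(m)$, both $i$ and $i+1$ lie in $\{1,\dots,n-\nu(m)\}$, the range over which $v(m,\cdot)$ is summed, whereas $n$ does not (as $\nu(m)\geq 1$). So within that range $\mathbf{j}$ and $\mathbf{j}'$ differ only in coordinates $i$ and $i+1$ (note also $r\leq j_i$ and $p,q\geq 0$, so $\mathbf{j}'$ is a genuine tuple, in fact lying in $J$ by \eqref{eq:pq}), and a direct computation from Definition~\ref{de:V} gives
\[
v(m,\mathbf{j}')-v(m,\mathbf{j})
= -(n-i+1)r+(n-i)p+\bigl(s(j_i)-s(j_i-r)\bigr)-\bigl(s(j_{i+1}+p)-s(j_{i+1})\bigr).
\]

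Next I would rewrite the two sum-of-digits contributions. Because $r$ is the leading bit of $j_i$ (the largest power of $2$ not exceeding $j_i$), subtracting $r$ simply clears that bit, so $s(j_i-r)=s(j_i)-1$. For the other term, the carry identity $c(x,y)=s(x)+s(y)-s(x+y)$ recorded just before Lemma~\ref{le:2-adic-case-2} gives $s(j_{i+1}+p)-s(j_{i+1})=s(p)-c(j_{i+1},p)\leq s(p)$. Substituting yields the lower bound
\[
v(m,\mathbf{j}')-v(m,\mathbf{j})\ \geq\ \bigl[(n-i)p-s(p)-(n-i+1)r\bigr]+1,
\]
so it suffices to prove the arithmetic inequality $(n-i)p-s(p)\geq (n-i+1)r$.

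For this, set $a=n-i$, so that $a\geq\nu(m)+1\geq 2$, and solve \eqref{eq:pq} for $p$. Since $2^{a+1}-1=2(2^a-1)+1$, carrying out division of $(2^{a+1}-1)r$ by $2^a-1$ with remainder $<2^a-1$ forces $p=2r+\lfloor r/(2^a-1)\rfloor$ and $q=r\bmod(2^a-1)$; in particular $2r\leq p<4r$ because $2^a-1\geq 3$. Writing $r=2^b$, from $p<4r=2^{b+2}$ we get $s(p)\leq\lfloor\log_2 p\rfloor+1\leq b+2$, while from $p\geq 2r$ we get $(n-i)p-(n-i+1)r\geq 2ar-(a+1)r=(a-1)r\geq r=2^b$. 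A one-line check---directly when $b\in\{0,1\}$, where $p=2r$ and $s(p)=1$, and via $2^b\geq b+2$ when $b\geq 2$---shows $2^b\geq s(p)$. Combining the two estimates, $(n-i)p-s(p)\geq (n-i+1)r$, hence $v(m,\mathbf{j}')-v(m,\mathbf{j})\geq 1>0$.

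I expect the only real friction to be the middle of the last paragraph: extracting the exact value of $p$ from \eqref{eq:pq} (not merely the approximation $p\approx 2r$) and bounding $s(p)$ tightly enough that the small values $r=1,2$ do not spoil the strict inequality. Everything else is routine bookkeeping once the two sum-of-digits terms are expressed through carries.
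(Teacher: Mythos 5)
Your argument is correct, and the overall strategy is the same as the paper's: compute $v(m,\mathbf{j}')-v(m,\mathbf{j})$ as a sum over the two changed coordinates $i$ and $i+1$ (noting that $j_n$ lies outside the range of summation), express the sum-of-digits differences via the carry identity, and then use the division relation \eqref{eq:pq} to control $p$. But your final estimate is derived more carefully than the paper's, and this matters: the paper bounds
\[
(n-i)p-(n-i+1)r+1-s(p) \;>\; \tfrac{n-i-1}{2}p-\tfrac{n-i+1}{2}-\lceil\log_2 p\rceil \;\geq\; 0,
\]
but the middle expression is not in fact nonnegative for small $n-i$ (take $n-i=3$, $r=1$, $p=2$: it equals $-1$), so the chain as printed does not close, and the separate case $i=n-2$ is left as ``similar means.'' Your sharper route — extracting the exact value $p=2r+\lfloor r/(2^a-1)\rfloor$ with $a=n-i\geq 2$, using $2r\leq p<4r$ to get $s(p)\leq b+2$ with $r=2^b$, and checking $2^b\geq s(p)$ outright for $b\in\{0,1\}$ and via $2^b\geq b+2$ for $b\geq 2$ — establishes the needed inequality $(n-i)p-s(p)\geq(n-i+1)r$ uniformly for all $a\geq 2$, with no case split and no lossy intermediate bound. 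One small point to flag: both your proof and the paper's implicitly read ``the largest power of $2$ less than $j_i$'' as ``$\leq j_i$'' (so that $s(j_i-r)=s(j_i)-1$, i.e.\ $r$ clears the leading bit); with a strict reading the estimate $s(j_i)-s(j_i-r)\geq 1$ fails when $j_i$ is itself a power of $2$, and the repeated application of this lemma to drive $j_i$ to $0$ also requires the non-strict reading, so ``$\leq$'' is the intended meaning.
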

\begin{proof}
    It is clear that $p$ and $q$ exist by Euclid's Division Theorem.  Then since $j_k=j_k'$ for all $k\neq i, i+1, n$, the corresponding terms will cancel when we compute the difference $v(\mathbf{j}')-v(\mathbf{j})$. If $i<n-2$, then
    
    \begin{align*}
        v(m,\mathbf{j}') - v(m,\mathbf{j}) &= (n-i)p - (n-i+1)r + s(j_i) - s(j_i - r) + s(j_{i+1}) - s(j_{i+1} + p)\\
        & \geq (n-i)p - (n-i+1)r + 1 -s(p)\\
        & > \frac{n-i-1}{2}p - \frac{n-i+1}{2} - \lceil \log_2(p) \rceil \\
        &\geq 0
    \end{align*}
    since $r<(p+1)/2$ and $p \geq 2$.
    The remaining case, $i =n- 2$, can be easily proven by similar means.
\end{proof}

Observe that we can apply Lemma \ref{le:tuple-transformation-1} repeatedly to transform any tuple $\mathbf{j}\in J$ containing a non-zero element $j_i$, $1\leq i \leq n-\nu(m)$, to a tuple $\mathbf{j}'\in J$ with $j_i'=0$.  Thus, any tuple $\mathbf{j} \in J$ can be transformed to a tuple $\mathbf{j}'$, where all elements $j_i'=0$ except for $i\geq n-\nu(m)$, with $v(\mathbf{j}) < v(\mathbf{j}')$.  We will make use of this fact later on.

\begin{lemma}\label{le:tuple-transformation-2}
    Let $\mathbf{j}$ be a J-tuple where $j_n > 2$, and $\mathbf{j}'$ be the tuple such that
    
        \begin{align*}
    j_k' = 
    \begin{cases}
    j_k, & 1\leq k \leq n-\nu(m)-1; \\ 
    j_{n-\nu(m)}+p, & k=n-\nu(m); \\
    0, & n-\nu(m) < k < n; \\
    \sum_{k=n-\nu(m)+1}^n (2^{n-k+1}-1)j_k - (2^{\nu(m)+1} - 1)p, & k=n,
    \end{cases}
    \end{align*}
    where $p$ is chosen to be as largest as possible so that $j_n' < 2^{\nu(m)+1} -1$. Then
    \[
    v(m,\mathbf{j}) < v(m,\mathbf{j}').
    \]
\end{lemma}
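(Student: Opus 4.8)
The plan is to first confirm that $\mathbf{j}'$ really is a $J$-tuple, and then to compute the difference $v(m,\mathbf{j}')-v(m,\mathbf{j})$ exactly and bound it below using subadditivity of the sum-of-digits function $s$.

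To see that $\mathbf{j}'\in J$, observe that the prescription for $p$ is precisely Euclidean division: writing $S=\sum_{k=n-\nu(m)+1}^{n}(2^{n-k+1}-1)j_k$, the requirement that $j_n'<2^{\nu(m)+1}-1$ with $p$ as large as possible forces $p=\lfloor S/(2^{\nu(m)+1}-1)\rfloor\geq 0$ and $j_n'=S-(2^{\nu(m)+1}-1)p\in\{0,\dots,2^{\nu(m)+1}-2\}$, so all entries of $\mathbf{j}'$ are non-negative. The net change to $\sum_k(2^{n-k+1}-1)j_k$ is $(2^{\nu(m)+1}-1)p$ from coordinate $n-\nu(m)$, minus $\sum_{k=n-\nu(m)+1}^{n-1}(2^{n-k+1}-1)j_k$ from zeroing coordinates $n-\nu(m)+1,\dots,n-1$, plus $(j_n'-j_n)$ from coordinate $n$; these sum to zero, so $\mathbf{j}'$ still satisfies the defining relation of $J$.

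Next I would use that $v(m,\cdot)$ depends only on coordinates $1,\dots,n-\nu(m)$, while $\mathbf{j}$ and $\mathbf{j}'$ agree on coordinates $1,\dots,n-\nu(m)-1$. Hence all but the $k=n-\nu(m)$ term of $v(m,\mathbf{j}')-v(m,\mathbf{j})$ cancel, leaving
\[
v(m,\mathbf{j}')-v(m,\mathbf{j})=(\nu(m)+1)p-\bigl(s(j_{n-\nu(m)}+p)-s(j_{n-\nu(m)})\bigr).
\]
By subadditivity of $s$ we have $s(j_{n-\nu(m)}+p)\leq s(j_{n-\nu(m)})+s(p)$, so the right-hand side is at least $(\nu(m)+1)p-s(p)$. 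Since $\nu(m)\geq 1$ and $s(p)\leq p$ for all $p\geq 0$, this is at least $2p-s(p)\geq 2p-p=p$. It therefore suffices to show $p\geq 1$, and this is exactly where $j_n>2$ enters: in the case $\nu(m)=1$ relevant to Zagier's conjecture we have $2^{\nu(m)+1}-1=3$ and the coordinate-$n$ contribution alone gives $S\geq j_n\geq 3$, so $p=\lfloor S/3\rfloor\geq 1$, whence $v(m,\mathbf{j}')-v(m,\mathbf{j})\geq p\geq 1>0$.

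The one genuinely load-bearing step is establishing $p\geq 1$; the check that $\mathbf{j}'\in J$ is pure bookkeeping, and the estimate $(\nu(m)+1)p-s(p)\geq p$ needs nothing beyond $\nu(m)\geq 1$ and $s(p)\leq p$ (with a small note at $p=1$, where $s(p)=p$ but still $(\nu(m)+1)p-s(p)=p>0$). The only other care needed is with degenerate index ranges — for instance, when $\nu(m)=1$ the block of coordinates $n-\nu(m)+1,\dots,n-1$ is empty and the formula for $j_n'$ collapses to $j_n-3p$.
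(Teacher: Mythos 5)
Your proof is correct and follows essentially the same computation as the paper: isolate the single surviving term $k=n-\nu(m)$ in $v(m,\mathbf{j}')-v(m,\mathbf{j})$, then use subadditivity of $s$ (equivalently, non-negativity of the carry function) to reduce to the bound $(\nu(m)+1)p - s(p)$. Two things in your write-up are actually improvements over the paper's own argument: (i) the paper writes the coefficient as $(n-\nu(m)+1)$, whereas the correct coefficient coming from the $k=n-\nu(m)$ term of $v(m,\cdot)$ is $(n-k+1)=\nu(m)+1$, as you have it (the paper's is a harmless typo since both are $\geq 2$, but yours is right); and (ii) the paper simply asserts $(n-\nu(m)+1)p - s(p) > 0$ without noting that this fails if $p=0$, so the hypothesis $j_n>2$ must be invoked to guarantee $p\geq 1$ --- you make this load-bearing step explicit, correctly observing that for $\nu(m)=1$ (the only case the paper actually uses, since Section~3 is entirely about $m\equiv 2 \bmod 4$) one has $S=j_n\geq 3$ and hence $p=\lfloor j_n/3\rfloor\geq 1$. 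Your remark that the statement as written is delicate for $\nu(m)\geq 2$ is also apt: there $S\geq j_n\geq 3$ does not force $p\geq 1$ when $2^{\nu(m)+1}-1>S$, so the strict inequality can fail; the lemma is only invoked in the $\nu(m)=1$ case, where your argument closes the gap.
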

\begin{proof}
    We have that
    \begin{align*}
        v(m,\mathbf{j}') - v(m,\mathbf{j}) &= (n-\nu(m)+1)(j_{n-\nu(m)} + p) - s(j_{n-\nu(m)} + p) \\
        & \quad \quad - (n-\nu(m)+1)j_{n-\nu(m)} + s(j_{n-\nu(m)}) \\
        &= (n-\nu(m)+1)p + s(j_{n-\nu(m)}) - s(j_{n-\nu(m)} + p) \\
        & = (n-\nu(m)+1)p + c(j_{n-\nu(m)},p) - s(p) \\
        &\geq (n-\nu(m)+1)p - s(p) \\
        &> 0.
    \end{align*}
\end{proof}

In particular, when $m \equiv 2 \mod 4$, Lemma \ref{le:tuple-transformation-2} allows us to transform a tuple $\mathbf{j}\in J$, whose elements are all zero except for $j_{n-1}$ and $j_n>2$, to a tuple $\mathbf{j}'\in J$, whose elements are also all zero but with $j_n' \leq 2$, so that $v(\mathbf{j})<v(\mathbf{j}')$.

\begin{section}{Zagier's Conjecture}

In this section we prove Zagier's conjecture for the case where $m\equiv 2 \mod 4$, which we assume throughout this section.  In order to do this, we first derive additional lemmas that allow us to compare $V(\mathbf{j})$ for the tuple transformations described in the previous section.

\begin{lemma}
    If $m + 1 \equiv 0 \mod 3$, then $V(\mathbf{j}) < V(\mathbf{j}')$ for all $\mathbf{j} \neq \mathbf{j}'$, where $\mathbf{j}' = (0, 0 , \dots, \frac{m+1}{3}, 0)$.
\end{lemma}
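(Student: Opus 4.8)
The overall strategy is to establish, for every $\mathbf{j}\in J$ with $\mathbf{j}\neq\mathbf{j}'$, the chain of inequalities $V(\mathbf{j})\le v(\mathbf{j})<v(\mathbf{j}')=V(\mathbf{j}')$. The outer equality and the first inequality will come from analysing the single term $\gamma(n)$ (recall $\nu(m)=1$, so Lemma~\ref{le:V(j)} gives $V(\mathbf{j})=v(\mathbf{j})+\gamma(n)$), while the strict middle inequality will be produced by iterating the tuple transformations of Section~2.

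For the $\gamma(n)$ analysis, I would first record the identity $\alpha(n)=\tfrac12\bigl(j_n-1-\sum_{l=1}^{n-1}j_l\bigr)$, valid for every $\mathbf{j}\in J$: one combines $\alpha(n)=\tfrac{m}{2}-\sum_{l=1}^{n-1}2^{n-l}j_l$ with the defining relation $\sum_{l=1}^{n-1}(2^{n-l+1}-1)j_l+j_n=m+1$ of $J$ and simplifies (the right-hand side is an integer since $m\equiv2\bmod4$, consistent with Lemma~\ref{le:2-adic-case-2}). This identity forces $\alpha(n)\le j_n$ for every $\mathbf{j}\in J$. Hence, by Definition~\ref{de:V}, one of two things happens: if $0\le\alpha(n)\le j_n$ then $C_{j_n}(\alpha(n))=0$, the product $\prod_kC_{j_k}(\alpha(k))$ vanishes, and $V(\mathbf{j})=-\infty<V(\mathbf{j}')$ trivially (Lemma~\ref{le:2-adic-case-2}); otherwise $\alpha(n)<0$ and $\gamma(n)=-c(j_n,-\alpha(n)-1)\le0$, so $V(\mathbf{j})=v(\mathbf{j})+\gamma(n)\le v(\mathbf{j})$. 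Applying the same to $\mathbf{j}'$, where $j'_n=0$ forces $\alpha(n)<0$ and $\gamma(n)=-c(0,\cdot)=0$, yields $V(\mathbf{j}')=v(\mathbf{j}')$ (and, expanding the definition of $v$, $V(\mathbf{j}')=\tfrac{2}{3}(m+1)-s\bigl(\tfrac{1}{3}(m+1)\bigr)$, which is what the later count of $V(\mathbf{j}_{\max})$ will need).

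It then remains to prove $v(\mathbf{j})<v(\mathbf{j}')$ for $\mathbf{j}\neq\mathbf{j}'$. Since $\nu(m)=1$, repeated application of Lemma~\ref{le:tuple-transformation-1} transforms $\mathbf{j}$, strictly increasing $v$ at each step, into a tuple supported on positions $n-1$ and $n$, say $(0,\dots,0,a,b)$ with $3a+b=m+1$; if $b>2$, one further application of Lemma~\ref{le:tuple-transformation-2} strictly increases $v$ and produces a tuple $(0,\dots,0,a',b')$ with $b'\le2$. Because $3\mid m+1$ and $b'\in\{0,1,2\}$, necessarily $b'=0$ and $a'=\tfrac{m+1}{3}$, i.e. the chain terminates precisely at $\mathbf{j}'$. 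Moreover, if $\mathbf{j}\neq\mathbf{j}'$ then at least one transformation was genuinely performed — otherwise $\mathbf{j}$ would already be of the form $(0,\dots,0,a,b)$ with $b\le2$, which as just noted forces $\mathbf{j}=\mathbf{j}'$ — so the strict increases along the chain give $v(\mathbf{j})<v(\mathbf{j}')$. Combining the pieces yields $V(\mathbf{j})\le v(\mathbf{j})<v(\mathbf{j}')=V(\mathbf{j}')$, as claimed.

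I expect the main obstacle to be the bookkeeping in this chain argument rather than any individual inequality: one must check that the intermediate tuples remain in $J$ with non-negative entries (which follows from (\ref{eq:pq}) and the choice of $r$ in Lemma~\ref{le:tuple-transformation-1}), that processing positions $1,\dots,n-2$ in increasing order clears all of them while feeding the overflow only into positions still to be processed or into position $n$, and that the hypothesis $3\mid m+1$ genuinely pins the endpoint uniquely to $\mathbf{j}'$. The $\gamma(n)$ part, by contrast, is essentially immediate once the identity $\alpha(n)=\tfrac12\bigl(j_n-1-\sum_{l<n}j_l\bigr)$ is in hand.
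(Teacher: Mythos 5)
Your proposal is correct and follows the paper's own strategy: write $V(\mathbf{j})=v(\mathbf{j})+\gamma(n)$ via Lemma~\ref{le:V(j)}, show $\gamma(n)\le 0$ so that $V(\mathbf{j})\le v(\mathbf{j})$, and then obtain the strict inequality $v(\mathbf{j})<v(\mathbf{j}')$ by iterating Lemmas~\ref{le:tuple-transformation-1} and~\ref{le:tuple-transformation-2}, observing that $3\mid m+1$ forces the chain to terminate at $\mathbf{j}'$ with $j_n'=0$ and hence $\gamma(n)=0$. The one genuine improvement over the paper's terse write-up is your identity $\alpha(n)=\tfrac12\bigl(j_n-1-\sum_{l<n}j_l\bigr)$ derived from the defining relation of $J$, which cleanly shows $\alpha(n)\le j_n$ for every tuple and so justifies the implicit assumption in the paper that only the cases $\alpha(n)<0$ (giving $\gamma(n)=-c(\cdot,\cdot)\le0$) or $0\le\alpha(n)\le j_n$ (giving a vanishing product, hence trivially $V(\mathbf{j})=-\infty<V(\mathbf{j}')$) can occur.
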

\begin{proof}
By Lemmas \ref{le:tuple-transformation-1} and \ref{le:tuple-transformation-2}, we can transform $\mathbf{j}$ to a tuple $\mathbf{j}'$ so that $j_i'=0$ for all $i< n-1$ since $\nu(m)=1$.  Moreover, $j_{n-1}'=(m+1)/3$ and $j_n'=0$ since $m+1\equiv 0 \mod 3$.  It follows that
\begin{align*}
    V(\mathbf{j}) &= \sum_{k=1}^{n-1} [(n-k+1)j_k - s(j_k)] - c(j_n,-\alpha(n)-1) \\
    &\leq \sum_{k=1}^{n-1} [(n-k+1)j_k - s(j_k)]] = v(\mathbf{j}) \\
    &< v(\mathbf{j}') = V(\mathbf{j}')
\end{align*}
since $c(j_n',-\alpha'(n)-1)=0$ due to Lemma \ref{le:V(j)}.
\end{proof}

\begin{lemma}
    If $m + 1 \equiv 1 \mod 3$, then $V(\mathbf{j}) < V(\mathbf{j}')$ for all $\mathbf{j} \neq \mathbf{j}'$, where $\mathbf{j}' = (0, 0 , \dots, \frac{m}{3}, 1)$.
\end{lemma}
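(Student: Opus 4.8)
The plan is to follow the pattern of the preceding lemma: reduce the claim to the two inequalities $V(\mathbf{j})\le v(\mathbf{j})$ and $v(\mathbf{j})<v(\mathbf{j}')=V(\mathbf{j}')$, and then establish each. First I would dispose of the degenerate case: if $\prod_{k=1}^{n}C_{j_k}(\alpha(k))=0$, then $V(\mathbf{j})=-\infty$, and since $V(\mathbf{j}')$ will turn out to be finite there is nothing to prove; so assume the product is nonzero. By Lemma~\ref{le:2-adic-case-2} this forces $\alpha(n)<0$ or $\alpha(n)>j_n$, and the crucial point is to rule out the latter using only the defining relation of $J$. Writing $T=\sum_{i=1}^{n-1}2^{n-i}j_i\ge 0$ and $U=\sum_{i=1}^{n-1}j_i\ge 0$, the constraint $\sum_{i=1}^{n}(2^{n-i+1}-1)j_i=m+1$ gives $j_n=m+1-2T+U$ while $\alpha(n)=m/2-T$, so $\alpha(n)-j_n=-m/2-1+T-U$. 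If $\alpha(n)>j_n\ge 0$, then $\alpha(n)>0$, hence $T<m/2$, hence $T-U<m/2$, hence $\alpha(n)-j_n< -1 < 0$ --- contradicting $\alpha(n)>j_n$. Therefore $\alpha(n)<0$, and Lemma~\ref{le:V(j)} with Definition~\ref{de:V} gives $V(\mathbf{j})=v(\mathbf{j})-c(j_n,-\alpha(n)-1)\le v(\mathbf{j})$.

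For $v(\mathbf{j})<v(\mathbf{j}')$ I would run the tuple transformations. Since $\nu(m)=1$ and $\mathbf{j}\ne\mathbf{j}'$, the tuple $\mathbf{j}$ must have either a nonzero entry $j_i$ with $i<n-1$ or else $j_n>2$, so Lemma~\ref{le:tuple-transformation-1} or Lemma~\ref{le:tuple-transformation-2} applies; iterating these, with $v$ strictly increasing at each step, yields a tuple $\tilde{\mathbf{j}}$ all of whose entries vanish except $\tilde j_{n-1}$ and $\tilde j_n\le 2$. The constraint now reads $3\tilde j_{n-1}+\tilde j_n=m+1$, and since $m+1\equiv 1\pmod 3$ together with $0\le\tilde j_n\le 2$ force $\tilde j_n=1$ and $\tilde j_{n-1}=m/3$, we get $\tilde{\mathbf{j}}=\mathbf{j}'$, hence $v(\mathbf{j})<v(\mathbf{j}')$.

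It remains to check $V(\mathbf{j}')=v(\mathbf{j}')$, i.e.\ that the $\gamma(n)$ term vanishes for $\mathbf{j}'$. Here $\alpha'(n)=m/2-2(m/3)=-m/6<0$, so $\gamma'(n)=-c(1,\,m/6-1)$. Since $m\equiv 2\pmod 4$ and $m\equiv 0\pmod 3$ give $m\equiv 6\pmod{12}$, the integer $m/6$ is odd and $m/6-1$ is even; adding $1$ to an even number produces no carries, so $c(1,m/6-1)=0$ and $\gamma'(n)=0$. Putting the three ingredients together yields $V(\mathbf{j})\le v(\mathbf{j})<v(\mathbf{j}')=V(\mathbf{j}')$.

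I expect the main obstacle to be ruling out $\alpha(n)>j_n$: this is exactly where the clean bound $V(\mathbf{j})\le v(\mathbf{j})$ used in the preceding lemma could fail, since otherwise a positive carry term would enter $\gamma(n)$, and it must be read off from the arithmetic of $J$ rather than supplied by the transformation lemmas. Once that is settled, the termination of the transformation chain precisely at $\mathbf{j}'$ and the vanishing of $\gamma'(n)$ are routine congruence bookkeeping.
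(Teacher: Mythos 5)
Your proof is correct and follows the same route as the paper, which simply invokes the preceding lemma verbatim: tuple transformations (Lemmas \ref{le:tuple-transformation-1} and \ref{le:tuple-transformation-2}) give $v(\mathbf{j})<v(\mathbf{j}')$, and the congruence $m\equiv 6\pmod{12}$ makes $\gamma'(n)=-c(1,m/6-1)=0$, so $V(\mathbf{j}')=v(\mathbf{j}')$. The short algebraic argument you supply to rule out $\alpha(n)>j_n$ (hence $\gamma(n)>0$) is a detail the paper leaves implicit; it is worth having, since without it the inequality $V(\mathbf{j})\le v(\mathbf{j})$ is not automatic from Lemma~\ref{le:V(j)}.
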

\begin{proof}
We have $V(\mathbf{j}) < V(\mathbf{j}')$
by the same reasoning as in the previous lemma.
\end{proof}

\begin{lemma}
    If $m+1 \equiv 2 \mod 3$ and $m \equiv 2 \mod 8$, then $V(\mathbf{j}) < V(\mathbf{j}')$ for all $\mathbf{j} \neq \mathbf{j}'$, where $\mathbf{j}' = (0, 0 , \dots, \frac{m-1}{3}, 2)$.
\end{lemma}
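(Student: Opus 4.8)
The plan is to follow the template of the two preceding lemmas and then add one extra ingredient that uses the hypothesis $m \equiv 2 \pmod{8}$. First I would take an arbitrary $\mathbf{j} \in J$ with $\mathbf{j} \neq \mathbf{j}'$ and, using the remarks following Lemmas~\ref{le:tuple-transformation-1} and~\ref{le:tuple-transformation-2} (recall $\nu(m) = 1$), transform it to $\mathbf{j}'$: repeated applications of Lemma~\ref{le:tuple-transformation-1} zero out every coordinate $j_i$ with $i < n-1$ while strictly increasing $v(m,\mathbf{j})$, and if the $n$-th coordinate still exceeds $2$, a single application of Lemma~\ref{le:tuple-transformation-2} brings it below $2^{\nu(m)+1}-1 = 3$ while again strictly increasing $v$. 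Since these transformations preserve the defining relation, which on a tuple supported only in coordinates $n-1, n$ reads $3 j_{n-1} + j_n = m+1 \equiv 2 \pmod{3}$, the end tuple is forced to have $j_n = 2$ and $j_{n-1} = (m-1)/3$, i.e.\ it is exactly $\mathbf{j}'$. Hence for $\mathbf{j} \neq \mathbf{j}'$ at least one transformation genuinely fires and we obtain $v(m,\mathbf{j}) < v(m,\mathbf{j}')$.

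Next I would upgrade this to an inequality for $V$. From the defining relation of $J$ one computes $\beta(n) = j_n - 1 - (j_1 + \cdots + j_{n-1})$, so $\alpha(n) = \beta(n)/2 < j_n$ for every $\mathbf{j} \in J$; moreover $\alpha(n)$ is an integer because $m$ is even. Consequently the branch $\alpha(n) > j_n$ of $\gamma(n)$ never occurs: either some factor $C_{j_k}(\alpha(k))$ vanishes, in which case $V(\mathbf{j}) = -\infty$ and there is nothing to prove, or $\alpha(n) < 0$ and $\gamma(n) = -c\bigl(j_n, -\alpha(n)-1\bigr) \leq 0$. In the latter case Lemma~\ref{le:V(j)} gives $V(\mathbf{j}) = v(m,\mathbf{j}) + \gamma(n) \leq v(m,\mathbf{j})$. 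It therefore suffices to prove that the extra term for $\mathbf{j}'$ vanishes, i.e.\ $\gamma'(n) = 0$, since then $V(\mathbf{j}) \leq v(m,\mathbf{j}) < v(m,\mathbf{j}') = V(\mathbf{j}')$.

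The computation $\gamma'(n) = 0$ is the heart of the matter. For $\mathbf{j}' = (0,\dots,0,\frac{m-1}{3},2)$ we get $\alpha'(n) = \frac{1}{2}\bigl(2 - 1 - \frac{m-1}{3}\bigr) = \frac{4-m}{6}$, a negative integer for every $m$ in range, so $\gamma'(n) = -c\bigl(2, -\alpha'(n)-1\bigr) = -c\bigl(2, \frac{m-10}{6}\bigr)$. This is where $m \equiv 2 \pmod{8}$ is used: together with $m+1 \equiv 2 \pmod{3}$ (i.e.\ $m \equiv 1 \pmod{3}$) it forces $m \equiv 10 \pmod{24}$, so $\frac{m-10}{6}$ is divisible by $4$; its bit in the $2$'s place is $0$, and adding $2 = (10)_{2}$ produces no carry, so $c\bigl(2, \frac{m-10}{6}\bigr) = 0$ and $\gamma'(n) = 0$.

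The only real obstacle is this carry count, and it is the single point at which the present lemma departs from the two previous ones: the hypothesis $m \equiv 2 \pmod{8}$ is imposed precisely to make $\frac{m-10}{6}$ divisible by $4$. In the complementary case $m \equiv 6 \pmod{8}$, which with $m \equiv 1 \pmod{3}$ yields $m \equiv 22 \pmod{24}$, one has instead $\frac{m-10}{6} \equiv 2 \pmod{4}$, so $c\bigl(2, \frac{m-10}{6}\bigr) \geq 1$, $\gamma'(n) < 0$, and $\mathbf{j}'$ is no longer the unique maximizer; this is the exceptional residue $m \equiv 22 \pmod{24}$ appearing in Zagier's conjecture, which must be handled on its own.
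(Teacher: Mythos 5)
Your proof is correct and follows essentially the same approach as the paper: transform an arbitrary $\mathbf{j}\neq\mathbf{j}'$ via Lemmas~\ref{le:tuple-transformation-1} and~\ref{le:tuple-transformation-2} to get $v(\mathbf{j})<v(\mathbf{j}')$, observe that $\gamma(n)\le 0$ (or $V(\mathbf{j})=-\infty$) so $V(\mathbf{j})\le v(\mathbf{j})$, and show that $-\alpha'(n)-1=\frac{m-10}{6}$ is divisible by $4$ so $c(2,-\alpha'(n)-1)=0$ and $V(\mathbf{j}')=v(\mathbf{j}')$. If anything you are slightly more careful than the paper in making explicit that $\alpha(n)<j_n$ always holds (so the $\gamma(n)>0$ branch never occurs) and in pinning down that both congruences $m\equiv 2\bmod 8$ and $m\equiv 1\bmod 3$ are needed to get $m\equiv 10\bmod 24$.
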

\begin{proof}
Again, such a tuple $\mathbf{j}'$ exists because of Lemmas \ref{le:tuple-transformation-1} and \ref{le:tuple-transformation-2}.  We first determine the binary representation of $-\alpha(n)-1$.  Since
\begin{align*}
    -\alpha(n) - 1 &= - \frac{j_n - (1 + j_1 + \dots + j_{n-1})}{2} - 1 \\
    &= - \frac{1 - \frac{m-1}{3}}{2} - 1 \\
    &= \frac{\frac{m-1}{3} - 1}{2} - 1 \\
    &= \frac{\frac{m-1}{3}-3}{2} \\
    &= \frac{m-10}{6}
\end{align*}
and $m \equiv 2 \mod 8$ by assumption, it follows that $-\alpha(n) - 1$ has binary representation $b_n\cdots b_3100$. It follows that $c(2, -\alpha(n) -1) = 0$ and thus $V(\mathbf{j}')=v(\mathbf{j}')$ by Lemma \ref{le:V(j)}.  Moreover, we have
\begin{align*}
    V(\mathbf{j}') &= \sum_{k=1}^{n-1} [(n-k+1)j_k - s(j_k)] - c(j_n,-\alpha(n)-1) \\
    &= \frac{2(m-1)}{3} - s\left(\frac{m-1}{3}\right) - c(2, -\alpha(n) -1) \\
    &= \frac{2(m-1)}{3} - s\left(\frac{2(m-1)}{3}\right).
\end{align*}
It remains to be shown that $V(\mathbf{j}) < V(\mathbf{j}')$ for all $\mathbf{j} \neq \mathbf{j}'$.  This follows from
\begin{align*}
    V(\mathbf{j}) &= \sum_{k=1}^{n-1} [(n-k+1)j_k - s(j_k)] - c(j_n,-\alpha(n)-1) \\
    &\leq \sum_{k=1}^{n-1} [(n-k+1)j_k - s(j_k)]] =v(\mathbf{j}) \\
    &< v(\mathbf{j}') = V(\mathbf{j}').
\end{align*}
This proves the lemma.
\end{proof}

In order to handle the case $m + 1 \equiv 2 \mod 3$ and $m \equiv 6 \mod 8$ (or equivalently $m \equiv 22 \mod 24$), we will need the following lemma.  First, we define the following three special tuples, which exist for this case:
\begin{align*}
\mathbf{j}' & = (0, 0, \dots, \frac{m-1}{3}, 2) \\
\mathbf{j}'' & = (0, 0, \dots, \frac{m-1}{3} - 1, 5)\\
\mathbf{j}''' & = (0 , 0, \dots, 1, \frac{m-1}{3} - 2, 1).
\end{align*}

\begin{lemma} \label{le:6-mod-8}
Suppose $m + 1 \equiv 2 \mod 3$ and $m \equiv 6 \mod 8$.  Then for all $\mathbf{j} \notin \{\mathbf{j}', \mathbf{j}'', \mathbf{j}'''\}$, we have
\[
V(\mathbf{j}) < V(\mathbf{j}''').
\]
\end{lemma}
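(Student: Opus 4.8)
The plan is to combine the transformation estimates of Section~2 with the basic bound $V(\mathbf{j})\le v(\mathbf{j})$, valid for every $\mathbf{j}\in J$. To see the latter, note that the equation defining $J$ together with Definition~\ref{de:beta} gives $\alpha(n)=\frac{1}{2}\bigl(j_n-1-\sum_{k=1}^{n-1}j_k\bigr)$ (as computed in the proof of the preceding lemma), so $\alpha(n)>j_n$ would force $j_n+\sum_{k=1}^{n-1}j_k<-1$, which is impossible. Hence the third case of Lemma~\ref{le:2-adic-case-2} never occurs, $\gamma(n)\le 0$, and $V(\mathbf{j})=v(\mathbf{j})+\gamma(n)\le v(\mathbf{j})$ by Lemma~\ref{le:V(j)}.

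The heart of the argument is to show that $v(\mathbf{j}')=\max_{\mathbf{j}\in J}v(\mathbf{j})$, attained only by $\mathbf{j}'$, and that the only tuples with $v(\mathbf{j})=v(\mathbf{j}')-1$ are $\mathbf{j}''$ and $\mathbf{j}'''$. Since $m\equiv 2\mod 4$ and $m+1\equiv 2\mod 3$, the unique tuple of $J$ all of whose entries vanish except at positions $n-1$ and $n$, with $j_n\le 2$, is $\mathbf{j}'$. By Lemmas~\ref{le:tuple-transformation-1} and~\ref{le:tuple-transformation-2} and the remarks following them, every $\mathbf{j}\neq\mathbf{j}'$ is carried to $\mathbf{j}'$ by a nonempty chain of transformations, each strictly increasing the integer-valued $v$; thus along any such chain $v$ rises by at least the length of the chain, so $v(\mathbf{j})\le v(\mathbf{j}')-1$, with equality only if some single transformation carries $\mathbf{j}$ to $\mathbf{j}'$ and raises $v$ by exactly $1$. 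A direct computation from Definition~\ref{de:V} shows that one application of Lemma~\ref{le:tuple-transformation-2} raises $v$ by $2p-s(p)+c(j_{n-1},p)$, which equals $1$ only for $p=1$, which together with landing on $\mathbf{j}'$ forces $j_n=5$ and $j_{n-1}=\frac{m-1}{3}-1$, i.e.\ $\mathbf{j}=\mathbf{j}''$; and one application of Lemma~\ref{le:tuple-transformation-1} that clears a lone nonzero entry below position $n-1$ and lands on $\mathbf{j}'$ must act at $i=n-2$ on an entry $j_{n-2}=r$ that is a power of two, raising $v$ by $\frac{5r-2q}{3}+1-s(p)+c(j_{n-1},p)$, which equals $1$ only for $r=1$ (so $q=1$, $p=2$), forcing $j_{n-2}=1$, $j_n=1$, $j_{n-1}=\frac{m-1}{3}-2$, i.e.\ $\mathbf{j}=\mathbf{j}'''$. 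Every other situation — a larger $r$ or $p$, more than one nonzero entry below position $n-1$, or any tuple whose reduction to $\mathbf{j}'$ needs two or more transformations — costs at least $2$ in $v$.

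It then remains to evaluate $V(\mathbf{j}''')$. From the formula for $\alpha(n)$ one computes $-\alpha(n)-1=\frac{m-10}{6}$ for $\mathbf{j}'''$, which is even because $m\equiv 22\mod 24$; hence $c(1,-\alpha(n)-1)=0$, so $\gamma(n)=0$ and $V(\mathbf{j}''')=v(\mathbf{j}''')$ by Lemma~\ref{le:V(j)}. Writing $m=24s+22$, so that $\frac{m-1}{3}=8s+7$ and $\frac{m-1}{3}-2=8s+5$ with $s(8s+7)=s(s)+3$ and $s(8s+5)=s(s)+2$, a short computation gives $v(\mathbf{j}''')=v(\mathbf{j}')-1$, and therefore $V(\mathbf{j}''')=v(\mathbf{j}')-1$. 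Finally, for $\mathbf{j}\notin\{\mathbf{j}',\mathbf{j}'',\mathbf{j}'''\}$ the previous paragraph gives $v(\mathbf{j})\le v(\mathbf{j}')-2$, so that
\[
V(\mathbf{j})\le v(\mathbf{j})\le v(\mathbf{j}')-2<v(\mathbf{j}')-1=V(\mathbf{j}'''),
\]
which is the assertion of the lemma.

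The main obstacle is the second paragraph: pinning down $\mathbf{j}''$ and $\mathbf{j}'''$ as the only tuples within $1$ of the maximal $v$-value. This requires a careful, though elementary, audit of the two transformation types — in particular verifying that every transformation other than the two singled out above increases $v$ by at least $2$, and that a tuple whose reduction to $\mathbf{j}'$ takes more than one step necessarily loses at least $2$. The remaining steps are routine computations with $2$-adic valuations and the sum-of-digits function.
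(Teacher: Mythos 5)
Your approach is genuinely different from the paper's, and it is essentially sound. The paper reduces $\mathbf{j}$ to a tuple $\mathbf{g}$ with $g_i=0$ for $i\le n-3$ and $g_{n-2}$ a power of two, and then compares $v(\mathbf{g})$ directly to $v(\mathbf{j}''')$ in three cases ($\mathbf{g}=\mathbf{j}'''$; $g_{n-2}>1$; $g_{n-2}=0$). You instead exploit the fact that the transformation graph has a unique sink $\mathbf{j}'$: every $\mathbf{j}\ne\mathbf{j}'$ reaches $\mathbf{j}'$ by a nonempty chain along which the integer $v$ strictly increases, so $v(\mathbf{j})\le v(\mathbf{j}')-\ell$ with $\ell$ the chain length, and the only tuples at distance exactly one in $v$ from the maximum are those carried to $\mathbf{j}'$ by a single transformation of increment $1$. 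Your bookkeeping of those single-step increments — $2p-s(p)+c(j_{n-1},p)$ for Lemma~\ref{le:tuple-transformation-2} and $\tfrac{5r-2q}{3}+1-s(p)+c(j_{n-1},p)$ for Lemma~\ref{le:tuple-transformation-1} at $i=n-2$ — is correct, and correctly isolates $\mathbf{j}''$ (the $p=1$ case) and $\mathbf{j}'''$ (the $r=1$ case), where in each instance the required vanishing carry $c(j_{n-1},p)=0$ is a routine congruence check from $m\equiv 22\bmod 24$. Your opening observation that $\gamma(n)\le 0$ always (since $\alpha(n)=\tfrac12(j_n-1-\sum_{k<n}j_k)\le j_n$) is a clean way to get $V(\mathbf{j})\le v(\mathbf{j})$ that the paper uses only implicitly.

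What each approach buys: the paper's case split is more hands-on and requires computing one explicit difference per case, but as written it quietly omits the configuration $g_{n-2}=1$ with $(g_{n-1},g_n)\neq((m-1)/3-2,1)$; your chain-length framing subsumes that case automatically (such tuples have a length-$\ge 2$ chain to $\mathbf{j}'$, hence $v\le v(\mathbf{j}')-2$). On the other hand, your argument leans on the structural claim — only gestured at in the paper's remarks after Lemmas~\ref{le:tuple-transformation-1} and~\ref{le:tuple-transformation-2} — that the iterated transformations really do terminate at $\mathbf{j}'$ for every starting $\mathbf{j}$; for this to be airtight one must read ``$r$ the largest power of two $\le j_i$'' (so that Lemma~\ref{le:tuple-transformation-1} can clear $j_i=1$) and note that $p\ge 2r$, so the process does strictly decrease the nonzero support below position $n-1$. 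You flag the needed audit yourself; the computations you give are the correct ones, and filling them in is mechanical. Two small refinements to tighten the write-up: state explicitly that a single application of Lemma~\ref{le:tuple-transformation-1} at any $i<n-2$ cannot land on $\mathbf{j}'$ because $p\ge 2$ forces $j_{i+1}'>0$; and phrase the ``increment equals $1$'' conditions as $p=1$ (resp.\ $r=1$) \emph{together with} the vanishing of the relevant carry, which you then verify holds for the forced values of $j_{n-1}$.
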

\begin{proof}
    Since $\alpha_{\mathbf{j}'''}(n)$ is odd and $j_n'''=1$, we have $c(j_n''',-\alpha(n)-1)=0$ and thus $V(\mathbf{j})=v(\mathbf{j})$.  Moreover, we have
    \begin{align*}
        V(\mathbf{j}''') &= \sum_{k=1}^{n-1} [(n-k+1)j_k''' - s(j_k''')] - c(j_n''',-\alpha(n)-1) \\
        &= 3\cdot 1 - s(1) + \frac{2(m-7)}{3} - s\left(\frac{m-7}{3}\right)\\
        &= \frac{2(m-1)}{3}  -2 - s\left(\frac{m-1}{3} - 2\right) \\
        &= \frac{2(m-1)}{3} - s\left(\frac{m-1}{3}\right) - 1.
    \end{align*}
    Thus, it suffices to show that $v(\mathbf{j}) < v(\mathbf{j}''')$ since this will imply $V(\mathbf{j}) \leq v(\mathbf{j}) < v(\mathbf{j}')=V(\mathbf{j}')$. Note that for any tuple $\mathbf{j}$ containing an element $j_i \neq 0$ such that $1 \leq i \leq n-3$, we have $v(\mathbf{j}) < v(\mathbf{g})$ for some tuple $\mathbf{g}$ with $g_i = 0$ for all $1 \leq i \leq n-3$ and $g_{n-2} = 2^k$ for some $k$. To construct such a tuple $\mathbf{g}$, we simply apply the tuple transformation in Lemma \ref{le:tuple-transformation-1} repeatedly. 
    
    We now consider 3 cases. First, if $\mathbf{g} = \mathbf{j}'''$, then the theorem holds trivially. If $g_{n-2} > 1$, we proceed in two steps. Let $7(g_{n-2} - 1) = 3p + q$ where $q < 3$, and let $\mathbf{g}'$ be such that
    \begin{align*}
        g_i' &= 0 \ \text{for} \ 1 \leq i \leq n-3 \\
        g_{n-2}' &= 1 \\
        g_{n-1}' &= g_{n-1} + p \\
        g_n' &= g_n + q.
    \end{align*}
    Then we have
    \begin{align*}
        v(\mathbf{g}') - v(\mathbf{g}) &= 2 + 2(g_{n-1} + p) - s(g_{n+1} + p) - 3 g_{n-2} + 1 - 2g_{n-1} + s(g_{n-1}) \\
        &\geq 2p - 3g_{n-2} - \lceil \log_2(p) \rceil + 3 \\
        &\geq \frac{11p}{7} - \lceil \log_2(p) \rceil + \frac{12}{7} \\
        &> 0.
    \end{align*}
    Then applying Lemma \ref{le:tuple-transformation-2}  to $\mathbf{g}'$ completes the proof for this case. If $g_{n-2} = 0$, then we proceed as follows. Let $\frac{m-1}{3} = g_{n-1} + p$. Note that because $\mathbf{g} \notin \{\mathbf{j}', \mathbf{j}'', \mathbf{j}'''\}$, we have $p \geq 2$.  Thus,
    \begin{align*}
        v(\mathbf{j}''') - v(\mathbf{g}) &= 2(g_{n-1} + p) - s(g_{n-1} + p) - 1 - 2g_{n-1} + s(g_{n-1}) \\
        &\geq 2p - 1 - s(p) \\
        &> 0.
    \end{align*}
    This completes the proof.
\end{proof}

\begin{lemma}
    If $m + 1 \equiv 2 \mod 3$ and $m \equiv 46 \mod 48$, then $V(\mathbf{j}) < V(\mathbf{j}''')$ for all $\mathbf{j} \neq \mathbf{j}'$, where $\mathbf{j}''' = (0, 0 , \dots, 1, \frac{m-7}{3}, 1)$.
\end{lemma}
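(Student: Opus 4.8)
The plan is to finish the case analysis of Section~3 by reducing to Lemma~\ref{le:6-mod-8} and then eliminating the two remaining candidate tuples $\mathbf{j}'$ and $\mathbf{j}''$, thereby showing that $\mathbf{j}'''$ strictly dominates \emph{every} other tuple in $J$ (in particular $V(\mathbf{j}') < V(\mathbf{j}''')$). Since $m \equiv 46 \mod 48$ forces both $m \equiv 6 \mod 8$ and $m+1 \equiv 2 \mod 3$, Lemma~\ref{le:6-mod-8} already gives $V(\mathbf{j}) < V(\mathbf{j}''')$ for all $\mathbf{j} \notin \{\mathbf{j}', \mathbf{j}'', \mathbf{j}'''\}$, so only $\mathbf{j}'$ and $\mathbf{j}''$ remain. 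I would first record, from the proof of Lemma~\ref{le:6-mod-8}, that $V(\mathbf{j}''') = v(\mathbf{j}''') = \frac{2(m-1)}{3} - s\left(\frac{m-1}{3}\right) - 1$; abbreviating $v' := v(\mathbf{j}') = \frac{2(m-1)}{3} - s\left(\frac{m-1}{3}\right)$, the target value is $V(\mathbf{j}''') = v' - 1$. Because $m \equiv 6 \mod 8$ together with $m \equiv 1 \mod 3$ forces $\frac{m-1}{3} \equiv 7 \mod 8$, subtracting $1$ from $\frac{m-1}{3}$ lowers its binary digit sum by exactly $1$, so $v(\mathbf{j}'') = 2\left(\frac{m-1}{3}-1\right) - s\left(\frac{m-1}{3}-1\right) = v' - 1$ as well.

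Next I would compute $V(\mathbf{j}')$ and $V(\mathbf{j}'')$ directly. For $\mathbf{j}'$ we have $j_n' = 2$ and $\alpha_{\mathbf{j}'}(n) = \frac{4-m}{6} < 0$, so $-\alpha_{\mathbf{j}'}(n) - 1 = \frac{m-10}{6}$ and Lemma~\ref{le:V(j)} gives $V(\mathbf{j}') = v' - c\left(2, \frac{m-10}{6}\right)$. Writing $m = 48t + 46$ we get $\frac{m-10}{6} = 8t + 6$, whose binary expansion ends in $110$; adding $2$ to it produces carries out of bit positions $1$ and $2$, hence $c\left(2, \frac{m-10}{6}\right) \ge 2$ and $V(\mathbf{j}') \le v' - 2 < v' - 1 = V(\mathbf{j}''')$. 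Similarly, for $\mathbf{j}''$ we have $j_n'' = 5$ and $\alpha_{\mathbf{j}''}(n) = \frac{16-m}{6} < 0$, so $-\alpha_{\mathbf{j}''}(n) - 1 = \frac{m-22}{6}$ and $V(\mathbf{j}'') = v(\mathbf{j}'') - c\left(5, \frac{m-22}{6}\right) = (v'-1) - c\left(5, \frac{m-22}{6}\right)$. With $m = 48t + 46$ we have $\frac{m-22}{6} = 8t + 4$, whose binary expansion ends in $100$; adding $5$ to it produces a carry out of bit position $2$, so $c\left(5, \frac{m-22}{6}\right) \ge 1$ and $V(\mathbf{j}'') \le v' - 2 < V(\mathbf{j}''')$. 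Combining these two estimates with Lemma~\ref{le:6-mod-8} yields the claim.

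I expect the only real obstacle to be the two carry-count lower bounds $c\left(2, \frac{m-10}{6}\right) \ge 2$ and $c\left(5, \frac{m-22}{6}\right) \ge 1$: everything else is bookkeeping, but these require extracting the low-order binary digits of $\frac{m-10}{6}$ and $\frac{m-22}{6}$, and this is precisely the point where the hypothesis $m \equiv 46 \mod 48$ (rather than merely $m \equiv 22 \mod 24$) is needed. In the complementary class $m \equiv 22 \mod 48$ the same carry counts drop to $1$ and $0$, so there $V(\mathbf{j}') = V(\mathbf{j}'') = V(\mathbf{j}''') = v' - 1$ and all three tuples tie; this dichotomy is exactly what keeps the number of maximizing tuples odd in every subcase of $m \equiv 22 \mod 24$, which the global valuation argument in the introduction requires.
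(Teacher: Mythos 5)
Your proposal is correct and follows the same overall strategy as the paper: invoke Lemma~\ref{le:6-mod-8} to reduce to the two remaining candidates $\mathbf{j}'$ and $\mathbf{j}''$, compute $\alpha(n)$ for each, and use the carry count $c(j_n,-\alpha(n)-1)$ to show both values fall strictly below $V(\mathbf{j}''')=v(\mathbf{j}')-1$. Where you differ — and in fact improve on what is printed — is in the $\mathbf{j}'$ case. The paper's displayed chain for $\mathbf{j}'$ asserts $v(\mathbf{j}')=\frac{2(m-1)}{3}-s\bigl(\frac{m-1}{3}\bigr)=\frac{2(m-1)}{3}-s\bigl(\frac{m-1}{3}\bigr)-1$, which is inconsistent; what is true is $v(\mathbf{j}')=V(\mathbf{j}''')+1$. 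Consequently the paper's stated bound $c(j_n',-\alpha'(n)-1)>0$ only yields $V(\mathbf{j}')\le V(\mathbf{j}''')$, not the required strict inequality. You close this gap correctly: from $m\equiv 46\bmod 48$ you extract that $\frac{m-10}{6}=8t+6$ ends in $110$ in binary, so adding $2$ forces at least two carries, giving $c\ge 2$ and hence $V(\mathbf{j}')\le v(\mathbf{j}')-2<V(\mathbf{j}''')$. (This is exactly what the binary tail stated in the paper implies, but the paper never draws the $c\ge 2$ conclusion.) Your treatment of $\mathbf{j}''$ matches the paper's, with the useful extra observation that $v(\mathbf{j}'')=v(\mathbf{j}')-1$, and your closing remark on the class $m\equiv 22\bmod 48$ correctly explains why the two carry counts there drop to $1$ and $0$, producing the three-way tie handled in the next lemma.
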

\begin{proof}
    In light of Lemma \ref{le:6-mod-8}, it suffices to prove that $V(\mathbf{j}')<V(\mathbf{j}''')$ and $V(\mathbf{j}'')<V(\mathbf{j}''')$.  We first consider $\mathbf{j}'$.  We have 
    \begin{align*}
\alpha_{\mathbf{j}''}(n) & = m/2-2^{n-1}j_1 - 2^{n-2}j_2 - \cdots - 2j_{n-1} \\
& = m/2 - 2(m-1)/3  = (4-m)/6,
\end{align*}
    which implies $-\alpha(n)-1=(m-10)/6$ has binary expansion $b_n\ldots b_3110$.  Thus, $c(j_n',-\alpha(n)-1)>0$ since $j_n'=2$.  It follows that
    \begin{align*}
        V(\mathbf{j}') &= \sum_{k=1}^{n-1} [(n-k+1)j_k' - s(j_k')] - c(j_n',-\alpha(n)-1) \\
        & < \sum_{k=1}^{n-1} [(n-k+1)j_k' - s(j_k')]  \\
        & =  \frac{2(m-1)}{3} - s\left(\frac{m-1}{3}\right) \\
        &= \frac{2(m-1)}{3} - 2 - s\left(\frac{m-1}{3} - 2\right) \\
        &= \frac{2(m-1)}{3} - s\left(\frac{m-1}{3}\right) - 1 \\
        & = V(\mathbf{j}''').
    \end{align*}

As for $\mathbf{j}''$, we have $c(j_n'',-\alpha(n)-1)>0$ since $j_n''=5$ and 
\begin{align*}
\alpha_{\mathbf{j}''}(n) & = m/2-2^{n-1}j_1 - 2^{n-2}j_2 - \cdots - 2j_{n-1} \\
& = m/2 - 2(m-4)/3  = (16-m)/6,
\end{align*}
which implies $-\alpha(n)-1=(m-22)/6$ has binary expansion $b_n\ldots b_3100$.
 It follows that
    \begin{align*}
        V(\mathbf{j}') &= \sum_{k=1}^{n-1} [(n-k+1)j_k' - s(j_k')] - c(j_n',-\alpha(n)-1) \\
        & < \sum_{k=1}^{n-1} [(n-k+1)j_k' - s(j_k')]  \\
        & =  \frac{2(m-4)}{3} - s\left(\frac{m-4}{3}\right) \\
        &= \frac{2(m-1)}{3} - 2 - s\left(\frac{m-1}{3} - 1\right) \\
        &= \frac{2(m-1)}{3} - s\left(\frac{m-1}{3}\right) - 1 \\
        & = V(\mathbf{j}''').
    \end{align*}
This completes the proof.

\end{proof}

\begin{lemma}
    If $m + 1 \equiv 2 \mod 3$ and $m \equiv 22 \mod 48$, then
    \[
    V(\mathbf{j}) < V(\mathbf{j}')
    \]
 for all $\mathbf{j} \notin \{\mathbf{j}', \mathbf{j}'', \mathbf{j}'''\}$.  Moreover,   
\[
V(\mathbf{j}') = V(\mathbf{j}'') = V(\mathbf{j}''') = \frac{2(m-1)}{3} - s\left(\frac{2(m-1)}{3}\right) - 1.
\]
\end{lemma}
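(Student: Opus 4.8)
The plan is to reduce the statement to Lemma~\ref{le:6-mod-8} together with a handful of explicit binary-digit computations. First I would note that $m\equiv 22 \mod 48$ forces both $m\equiv 6 \mod 8$ and $m+1\equiv 2 \mod 3$, so Lemma~\ref{le:6-mod-8} applies and gives $V(\mathbf{j})<V(\mathbf{j}''')$ for every $\mathbf{j}\notin\{\mathbf{j}',\mathbf{j}'',\mathbf{j}'''\}$, together with the evaluation
\[
V(\mathbf{j}''')=\frac{2(m-1)}{3}-s\!\left(\frac{m-1}{3}\right)-1.
\]
Since $\frac{2(m-1)}{3}=2\cdot\frac{m-1}{3}$ and $s(2t)=s(t)$ for every positive integer $t$, this equals $\frac{2(m-1)}{3}-s\!\left(\frac{2(m-1)}{3}\right)-1$, which is the claimed common value. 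Hence it remains only to prove $V(\mathbf{j}')=V(\mathbf{j}'')=V(\mathbf{j}''')$; once $V(\mathbf{j}')=V(\mathbf{j}''')$ is in hand, the strict inequality $V(\mathbf{j})<V(\mathbf{j}')$ for $\mathbf{j}\notin\{\mathbf{j}',\mathbf{j}'',\mathbf{j}'''\}$ follows at once from $V(\mathbf{j})<V(\mathbf{j}''')=V(\mathbf{j}')$.

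To handle $\mathbf{j}'$, I would compute $\alpha_{\mathbf{j}'}(n)=\frac{m}{2}-2\cdot\frac{m-1}{3}=\frac{4-m}{6}<0$, so by Lemma~\ref{le:V(j)} we have $V(\mathbf{j}')=v(\mathbf{j}')-c\!\left(2,\frac{m-10}{6}\right)$, where $v(\mathbf{j}')=2\cdot\frac{m-1}{3}-s\!\left(\frac{m-1}{3}\right)$ directly from Definition~\ref{de:V}. Writing $m=48u+22$ gives $\frac{m-10}{6}=8u+2$, whose last three binary digits are $010$, so adding $j_n'=2$ produces exactly one carry; equivalently $c(2,8u+2)=s(2)+s(8u+2)-s(8u+4)=1$. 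Therefore $V(\mathbf{j}')=\frac{2(m-1)}{3}-s\!\left(\frac{m-1}{3}\right)-1=V(\mathbf{j}''')$.

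For $\mathbf{j}''$ the computation runs in parallel: $\alpha_{\mathbf{j}''}(n)=\frac{m}{2}-2\left(\frac{m-1}{3}-1\right)=\frac{16-m}{6}<0$, so $V(\mathbf{j}'')=v(\mathbf{j}'')-c\!\left(5,\frac{m-22}{6}\right)$ with $v(\mathbf{j}'')=2\left(\frac{m-1}{3}-1\right)-s\!\left(\frac{m-1}{3}-1\right)$. With $m=48u+22$ we get $\frac{m-22}{6}=8u$, whose last three binary digits are $000$, so adding $j_n''=5=(101)_2$ produces no carry and $c(5,8u)=0$. Since $\frac{m-1}{3}=16u+7$ ends in the four binary digits $0111$ while $\frac{m-1}{3}-1=16u+6$ ends in $0110$, we have $s\!\left(\frac{m-1}{3}-1\right)=s\!\left(\frac{m-1}{3}\right)-1$, and hence $v(\mathbf{j}'')=\frac{2(m-1)}{3}-s\!\left(\frac{m-1}{3}\right)-1=V(\mathbf{j}''')$. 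This completes the argument.

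The only genuinely delicate point, and the sole place where the refined congruence is used, is the determination of the two carry counts $c\!\left(2,\frac{m-10}{6}\right)$ and $c\!\left(5,\frac{m-22}{6}\right)$. In the previous lemma (the case $m\equiv 46 \mod 48$) these were $\geq 2$ and $\geq 1$, which produced the strict inequalities $V(\mathbf{j}'),V(\mathbf{j}'')<V(\mathbf{j}''')$; under $m\equiv 22 \mod 48$ they collapse to exactly $1$ and $0$, which is precisely what turns those inequalities into equalities. So the care required is to read off the correct low-order binary digits of $\frac{m-10}{6}$ and $\frac{m-22}{6}$ from $m\equiv 22 \mod 48$ and to verify that the carry counts equal these exact values rather than merely being bounded by them.
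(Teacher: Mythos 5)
Your proof is correct and follows essentially the same route as the paper: both invoke Lemma~\ref{le:6-mod-8} to reduce the claim to the equality $V(\mathbf{j}') = V(\mathbf{j}'') = V(\mathbf{j}''')$, and both establish this by writing $m = 48q+22$ and computing the carry counts $c(2, 8q+2)=1$ and $c(5, 8q)=0$ from the low-order binary digits. The only cosmetic difference is that you reuse the value of $V(\mathbf{j}''')$ already derived inside Lemma~\ref{le:6-mod-8} rather than recomputing all three values explicitly in terms of $q$; you also avoid the paper's unnoticed relabeling of the three tuples between their definition and the body of this proof.
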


\begin{proof}
Again, in light of Lemma \ref{le:6-mod-8}, it suffices to prove that $V(\mathbf{j}') = V(\mathbf{j}'') = V(\mathbf{j}''')$.  Write $m=48q+22$ for $q\in \mathbb{N}$ and so that the elements of $\mathbf{j}'$, $\mathbf{j}''$, and $\mathbf{j}'''$ take the form
\begin{equation}
j_i'=\begin{cases}
0, & 1\leq i \leq n-3 \\
1, & i=n-2 \\
16q+5 & i=n-1 \\
1 & i=n,
\end{cases}
\end{equation}
\begin{equation}
j_i''=\begin{cases}
0, & 1\leq i \leq n-3 \\
0, & i=n-2 \\
16q+7 & i=n-1 \\
2 & i=n,
\end{cases}
\end{equation}
and
\begin{equation}
j_i'''=\begin{cases}
0, & 1\leq i \leq n-3 \\
0, & i=n-2 \\
16q+6 & i=n-1 \\
5 & i=n,
\end{cases}
\end{equation}

It is straightforward to show that
\begin{align*}
\alpha_{\mathbf{j}}(n) & =-(8q+3)<0 \\
\alpha_{\mathbf{j}'}(n) & =-(8q+3)<0 \\
\alpha_{\mathbf{j}''}(n) & =-(8q+1)<0.
\end{align*}
Then
\begin{align*}
V(\mathbf{j}') & =3j_{n-2}'-s(j_{n-2}')+2j_{n-1}'-s(j_{n-1}')-c(j_n,-\alpha_{\mathbf{j}'}(n)-1) \\
& = 3(1)-s(1)+2(16q+5)-s(16q+5)-c(1,8q+2) \\
& = 32q+12-s(q)-s(5) \\
& = 32q+10-s(q).
\end{align*}
Similarly,
\begin{align*}
V(\mathbf{j}'') & =3j''_{n-2}-s(j''_{n-2})+2j''_{n-1}-s(j''_{n-1})-c(j''_n,-\alpha_{\mathbf{j}''}(n)-1) \\
& = 3(0)-s(0)+2(16q+7)-s(16q+7)-c(2,8q+2) \\
& = 32q+14-s(q)-s(7)-c(2,8q+2) \\
& = 32q+10-s(q)
\end{align*}
and
\begin{align*}
V(\mathbf{j}''') & =3j'''_{n-2}-s(j'''_{n-2})+2j'''_{n-1}-s(j'''_{n-1})-c(j'''_n,-\alpha_{\mathbf{j}'''}(n)-1) \\
& = 3(0)-s(0)+2(16q+6)-s(16q+6)-c(5,8q) \\
& = 32q+12-s(q)-s(6)-c(5,8q) \\
& = 32q+10-s(q).
\end{align*}
Thus, $V(\mathbf{j}') = V(\mathbf{j}'')=V(\mathbf{j}''')$.
\end{proof}

The following theorem summarizes the form of the maximum tuple $\mathbf{j}_{\max}$ for the case $m\equiv 2 \mod 4$.

\begin{theorem} Suppose $m\equiv 2 \mod 4$.  The maximum tuple $\mathbf{j}_{\max}$ occurs in the following form:
\begin{enumerate}
\item If $m+1\equiv 0 \mod 3$, then $\mathbf{j}_{\max}=(0,...,0,p,0)$ where $p=(m+1)/3$.
\item if $m+1\equiv 1 \mod 3$, then $\mathbf{j}_{\max}=(0,...,0,p,1)$ where $p=m/3$.
\item If $m+1\equiv 2 \mod 3$ and
\begin{enumerate}
\item If $m\equiv 2 \mod 8$, then $\mathbf{j}_{\max}=(0,...,0,p,2)$ where $p=(m-1)/3$.
\item If $m\equiv 46 \mod 48$, then $\mathbf{j}_{\max}=(0,...,1,p-2,1)$ where $p=(m-1)/3$.
\item If $m\equiv 22 \mod 48$, then 
\begin{center}
$\mathbf{j}_{\max}=(0,...,1,(m-7)/3,1), (0,...,0,(m-1)/3,2), (0,\ldots,0, (m-4)/3,5)$.
\end{center}
\end{enumerate}
\end{enumerate}

\end{theorem}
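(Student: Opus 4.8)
The plan is to obtain the theorem as a direct synthesis of the case-by-case lemmas proved above in this section, since each of those lemmas already isolates the unique (or, in one instance, triple) maximizer of $V$ over $J$ for the relevant residue class of $m$. Throughout we use $\nu(m)=1$, so that Lemma \ref{le:V(j)} gives $V(\mathbf{j}) = v(\mathbf{j}) + \gamma(n)$, and the reductions of Lemmas \ref{le:tuple-transformation-1} and \ref{le:tuple-transformation-2} guarantee that every $\mathbf{j}\in J$ may be transformed, with strictly increasing value of $v$, into a tuple supported on its last two or three coordinates; this is exactly the mechanism invoked inside each of those lemmas.

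First I would check that the five listed cases partition $\{m : m \equiv 2 \bmod 4\}$. Since $m$ is even, $m \equiv 2$ or $6 \pmod 8$, and $m+1$ lies in exactly one class modulo $3$. When $m+1 \equiv 0$ or $1 \pmod 3$ no further condition is imposed, giving cases (1) and (2). When $m+1 \equiv 2 \pmod 3$, i.e. $m \equiv 1 \pmod 3$, the Chinese Remainder Theorem combined with $m \equiv 2 \pmod 8$ forces $m \equiv 10 \pmod{24}$ (case (3a)), while $m \equiv 6 \pmod 8$ forces $m \equiv 22 \pmod{24}$, which refines to $m \equiv 46 \pmod{48}$ (case (3b)) or $m \equiv 22 \pmod{48}$ (case (3c)). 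These are mutually exclusive and exhaustive.

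Then, in each case, I would quote the matching lemma. For case (1) the lemma for $m+1 \equiv 0 \pmod 3$ yields $V(\mathbf{j}) < V(\mathbf{j}')$ for every $\mathbf{j}\neq\mathbf{j}'$ with $\mathbf{j}' = (0,\ldots,0,\tfrac{m+1}{3},0)$, so this $\mathbf{j}'$ is the unique maximizer; cases (2) and (3a) are identical in form, giving $\mathbf{j}_{\max} = (0,\ldots,0,\tfrac{m}{3},1)$ and $\mathbf{j}_{\max} = (0,\ldots,0,\tfrac{m-1}{3},2)$ respectively. For case (3b), Lemma \ref{le:6-mod-8} gives $V(\mathbf{j}) < V(\mathbf{j}''')$ for all $\mathbf{j} \notin \{\mathbf{j}',\mathbf{j}'',\mathbf{j}'''\}$, and the lemma for $m \equiv 46 \pmod{48}$ supplies the two remaining strict inequalities $V(\mathbf{j}') < V(\mathbf{j}''')$ and $V(\mathbf{j}'') < V(\mathbf{j}''')$, so $\mathbf{j}_{\max} = \mathbf{j}''' = (0,\ldots,1,\tfrac{m-7}{3},1)$ is again unique. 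For case (3c), combining Lemma \ref{le:6-mod-8} with the lemma for $m \equiv 22 \pmod{48}$ gives $V(\mathbf{j}) < V(\mathbf{j}')$ for all $\mathbf{j}\notin\{\mathbf{j}',\mathbf{j}'',\mathbf{j}'''\}$ together with $V(\mathbf{j}') = V(\mathbf{j}'') = V(\mathbf{j}''')$, so the maximum is attained by precisely the three listed tuples. A small amount of bookkeeping reconciles the names: writing $m = 48q+22$ one has $\tfrac{m-7}{3} = 16q+5$, $\tfrac{m-1}{3} = 16q+7$, $\tfrac{m-4}{3} = 16q+6$, matching the definitions preceding Lemma \ref{le:6-mod-8} with the tuples in the statement; each such tuple lies in $J$ (for $n$ with $m \le 2^{n+1}-3$) since it arises from an honest $J$-tuple under the transformations of Lemmas \ref{le:tuple-transformation-1} and \ref{le:tuple-transformation-2}.

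I do not expect a genuine obstacle, as the theorem is a packaging statement: the substantive work was carried out in the preceding lemmas. The only points requiring care are confirming the case division is exhaustive and disjoint, and verifying that every inequality quoted is \emph{strict} against the claimed maximizer(s), so that the maximizing set is exactly a singleton in cases (1), (2), (3a), (3b) and exactly the stated triple in case (3c).
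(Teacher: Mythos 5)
Your proposal is correct and matches the paper's intent exactly: the paper states this theorem as a summary of the preceding lemmas and offers no separate proof, so the synthesis you describe (checking the case split via CRT and then citing the lemma for each residue class, with Lemma \ref{le:6-mod-8} supplying the exclusion of tuples outside $\{\mathbf{j}',\mathbf{j}'',\mathbf{j}'''\}$ in cases (3b) and (3c)) is precisely the implicit argument. One small bookkeeping note in your favor: the paper's lemma for $m \equiv 22 \bmod 48$ silently permutes the labels $\mathbf{j}', \mathbf{j}'', \mathbf{j}'''$ relative to their definitions just before Lemma \ref{le:6-mod-8}, and your explicit identifications $\frac{m-7}{3}=16q+5$, $\frac{m-1}{3}=16q+7$, $\frac{m-4}{3}=16q+6$ correctly reconcile this; the theorem's content is unaffected since all three tuples are claimed maximizers.
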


We now have all the necessary ingredients to prove Zagier's conjecture.

\begin{proof}[Proof of  Theorem \ref{th:zagier} (Zagier's Conjecture):]
We divide the proof into the following cases: 

\begin{enumerate}
\item $m+1\equiv 0 \mod 3$.
\item $m+1\equiv 1 \mod 3$.
\item $m+1\equiv 2 \mod 3$ and 
\begin{enumerate}
\item $m\equiv 2 \mod 8$.
\item $m\equiv 46 \mod 48$.
\item $m\equiv 22 \mod 48$.
\end{enumerate}
\end{enumerate}

\noindent Case (1): Write $m+1=3p$ for some positive integer $p$.  Since $m \equiv 2 \mod 4$, it follows that $3p-1 \equiv 2 \mod 4$ and so $p \equiv 1 \mod 4$.  Now, recall that $\mathbf{j}_{\max}=(j_{n-1},j_n)=(p,0)$, we have $\alpha(n)=-(1+p)/2$.  Then using the relation 
\[
c(j_n,-\alpha(n)-1)=s_(j_n) + s(-\alpha(n)-1) - s(j_n-\alpha(n)-1),
\]
we have
\begin{align*}
    -\nu(b_{2,m}) 
    & = \nu(m)+ \sum_{k=1}^{n-1} [(n-k+1)j_k - s(j_k)]- s(j_n)- s(-\alpha(n)-1) + s(j_n-\alpha(n)-1)  \\
    &= 1+2j_{n-1}-s(j_{n-1}) \\
    & = 1+2p-s(p) \\
    & = 1+2p-s(2p)\\
    & = 1+\lfloor 2p\rfloor - s(\lfloor 2p\rfloor) \\
    & = \epsilon(m)+\left\lfloor \frac{2}{3}(m+1)\right\rfloor -s\left(\left\lfloor \frac{2}{3}(m+1)\right\rfloor\right).
\end{align*}

\noindent Case (2): Write $m+1=3p+1$ for some positive integer $p$.  Since $m \equiv 2 \mod 4$, it follows that $3p \equiv 2 \mod 4$ and so $p \equiv 2 \mod 4$.  
Since in this case $\mathbf{j}_{\max}=(j_{n-1},j_n)=(p,1)$, we have $\alpha(n)=-p/2$. It follows that 
\begin{align*}
    -\nu(b_{2,m}) &= \nu(m)+ \sum_{k=n-1}^{n-1} [(n-k+1)j_k - s(j_k)] - s(j_n) - s(-\alpha(n)-1)+ s(j_n-\alpha(n)-1)  \\
    &= 1+2j_{n-1}-s(j_{n-1})-s(j_n)-s(p/2-1) + s(j_n+p/2-1) \\
    & = 1+2p-s(p) -1 - s((p-2)/2)+s(p/2) \\
    & = 2p-s(p-2) \\
    & = 1+2p-s(p) \\
    & = 1+2p-s(2p)\\
    & = 1+\lfloor 2p+2/3\rfloor - s(\lfloor 2p+2/3\rfloor) \\
    & = \epsilon(m)+\left\lfloor \frac{2}{3}(m+1)\right\rfloor -s\left(\left\lfloor \frac{2}{3}(m+1)\right\rfloor\right).
\end{align*}

\noindent Case (3)-(a): Write $m+1=3p+2$ for some positive integer $p$.  Since $m \equiv 2 \mod 8$, it follows that $3p+1 \equiv 2 \mod 8$ and so $p \equiv 3 \mod 8$. Thus, $p$ has binary representation $b_r\ldots b_3011$.  Since $\mathbf{j}_{\max}=(j_{n-1},j_n)=(p,2)$, we have $\alpha(n)=(1-p)/2$.  It follows that
\begin{align*}
    -\nu(b_{2,m}) &= \nu(m)+ \sum_{k=n-1}^{n-1} [(n-k+1)j_k - s(j_k)] - s(j_n) - s(-\alpha(n)-1)+ s(j_n-\alpha(n)-1) \\
    &= 1+2j_{n-1}-s(j_{n-1})-s(j_n)-s((p-1)/2-1) + s(j_n+(p-1)/2-1) \\
    & = 1+2p-s(p) -s(2)-s((p-3)/2)+s((p+1)/2) \\
    & = 2p-s(p)-s(p-3)+s(p+1) \\
    & =  2p-s(p)+s(4)\\
    & = 1+(2p+1)-s(2p+1) \\
    & = 1+\lfloor 2p+4/3\rfloor - s(\lfloor 2p+4/3\rfloor) \\
    & = \epsilon(m)+\left\lfloor \frac{2}{3}(m+1)\right\rfloor -s\left(\left\lfloor \frac{2}{3}(m+1)\right\rfloor\right).
\end{align*}

\noindent Case (3)-(b): Write $m+1=3p+2$ for some positive integer $p$.  Since $m \equiv 46 \mod 48$, it follows that $3p+1 \equiv 46 \mod 48$ and so $p \equiv 15 \mod 48$. Thus, $p$ has binary representation $b_r\ldots b_51111$.  Since $\mathbf{j}_{\max}=(j_{n-1},j_n)=(1,p-2,1)$, we have $\alpha(n)=(1-p)/2$. It follows that
\begin{align*}
    -\nu(b_{2,m}) &= \nu(m)+ \sum_{k=n-1}^{n-1} [(n-k+1)j_k - s(j_k)] - s(j_n)-s(-\alpha(n)-1)+s(j_n-\alpha(n)-1) \\
    &= 1+3j_{n-2}-s(j_{n-2})+ 2j_{n-1}-s(j_{n-1})-s(j_n)-s((p-1)/2-1) + s(j_n+(p-1)/2-1) \\
    & = 1+3\cdot 1 -s(1)+2(p-2)-s(p-2)-s(1)-s((p-3)/2)+s((p-1)/2) \\
    & = 2p -2 -s(p-2)-s(p-3)+s(p-1) \\
    & = 2p-2-(s(p)-s(2))-(s(p)-s(3))+(s(p)-s(1)) \\
    & = 2p -s(p) \\
    & = 2p+1-s(2p+1) \\
    & = 0+\lfloor 2p+4/3\rfloor - s(\lfloor 2p+4/3\rfloor) \\
    & = \epsilon(m)+\left\lfloor \frac{2}{3}(m+1)\right\rfloor -s\left(\left\lfloor \frac{2}{3}(m+1)\right\rfloor\right).
\end{align*}
Here, $\epsilon(m)=0$ since $m=2m_0$, where $m_0\equiv -1 \mod 12$.
\\
\

\noindent Case (3)-(c): Write $m+1=3p+2$ for some positive integer $p$.  Since $m \equiv 22 \mod 48$, it follows that $3p+1 \equiv 22 \mod 48$ and so $p \equiv 7 \mod 48$. In this case $\mathbf{j}_{\max}=(j_{n-2},j_{n-1},j_n)=(1,p-2,1)$ and thus the same argument applies as in Case (3)-(b).  This completes the proof of Zagier's conjecture.

\end{proof}

\end{section}

\end{document}